\def\S{\mathcal S}
\def\T{\mathcal T}
\def\P{\mathcal P}
\def\Q{\mathcal Q}
\def\H{\mathcal H}
\def\J{\mathcal J}
\def\R{\mathbb R}
\def\N{\mathbb N}
\def\0{\ring}
\def\grad{\operatorname{grad}}
\def\curl{\operatorname{curl}}
\renewcommand{\div}{\operatorname{div}} 
\newcommand\tr{\operatorname{tr}}
\newcommand\x{\times}
\newcommand{\ldeg}{\operatorname{ldeg}}
\newcommand{\supp}{\operatorname{supp}} 
\newtheorem{thm}{Theorem}[section] 
\newtheorem{lemma}[thm]{Lemma} 
\newtheorem{prop}[thm]{Proposition} 
\begin{document}
\title{Finite element differential forms on cubical meshes}

\author{Douglas N. Arnold}
\address{Department of Mathematics, University of Minnesota, Minneapolis,
Minnesota 55455}
\email{arnold@umn.edu}
\thanks{The work of the first author was supported in part by NSF grant
DMS-1115291.}

\author{Gerard Awanou}
\address{Department of Mathematics, Statistics, and Computer Science, M/C 249.
University of Illinois at Chicago, 
Chicago, IL 60607-7045}
\email{awanou@uic.edu}  
\thanks{The work of the second author was supported in part by
NSF grant DMS-0811052 and the Sloan Foundation.}

\keywords{mixed finite elements, finite element differential forms, finite element exterior calculus, cubical meshes, cubes}
\subjclass[2010]{Primary: 65N30}

\begin{abstract}
We develop a family of finite element spaces of differential forms defined
on cubical meshes in any number of dimensions.
The family contains elements of all polynomial degrees and all
form degrees.  In two
dimensions, these include the serendipity finite elements and the
rectangular BDM elements.  In three
dimensions they include a recent generalization of the serendipity spaces,
and new $H(\curl)$ and $H(\div)$ finite element spaces.
Spaces in the family can be combined to give finite element subcomplexes of the de~Rham
complex which satisfy the basic hypotheses of the finite element exterior calculus,
and hence can be used for stable discretization of a variety of problems.
The construction and properties of the spaces are established in a uniform manner
using finite element exterior calculus.
\end{abstract}

\maketitle

\section{Introduction}
In this paper we develop a family of finite element spaces $\S_r\Lambda^k(\T_h)$ of differential forms,
where $\T_h$ is a mesh
of cubes in $n\ge 1$ dimensions, $r\ge1$ is the polynomial degree, and $0\le k\le n$ is
the form degree.  Thus, in $3$ dimensions,  the space $\S_r\Lambda^k(\T_n)$ is a finite
element subspace of the Hilbert space
$H^1$, $H(\curl)$, $H(\div)$, or $L^2$, according to whether $k=0$, $1$, $2$, or $3$.
For $n=1$ or $2$, the spaces were previously known, while in three (or more) dimensions,
they are mostly new.  Specifically, our construction yields a new family
of $H(\curl)$ elements and a new family of $H(\div)$ elements on cubical meshes in three dimensions.
Our treatment in an exterior calculus 
framework allows all the
spaces and their properties to be developed together.  The spaces combine together in
complexes satisfying the basic hypotheses of the finite element exterior calculus
\cite{bulletin}.  This means that, in addition to their use individually, they can
be used in pairs, $\S_{r+1}\Lambda^{k-1}(\T_h)\x\S_r\Lambda^k(\T_h)$ in a variety
of mixed finite element applications, with stability and convergence following
from the abstract theory of \cite{bulletin}.  Element diagrams
for some of the spaces are shown in Figure~{\ref{elts}}.  The dimension
of the shape function spaces are given in Theorem~\ref{unisolv} below
and are tabulated for $n\le 4$ and $r\le 7$ in Table~\ref{dims}.

\begin{figure}
\leftline{\hspace{1.35in}$r=1$\hspace{1.13in}$r=2$\hspace{1.13in}$r=3$}%
\vspace{.15in}%
\centerline{%
 \raise.5in\hbox{$k=0$}\qquad
 \raise.5in\hbox{$\S_1\Lambda^0$}\includegraphics[width=1in]{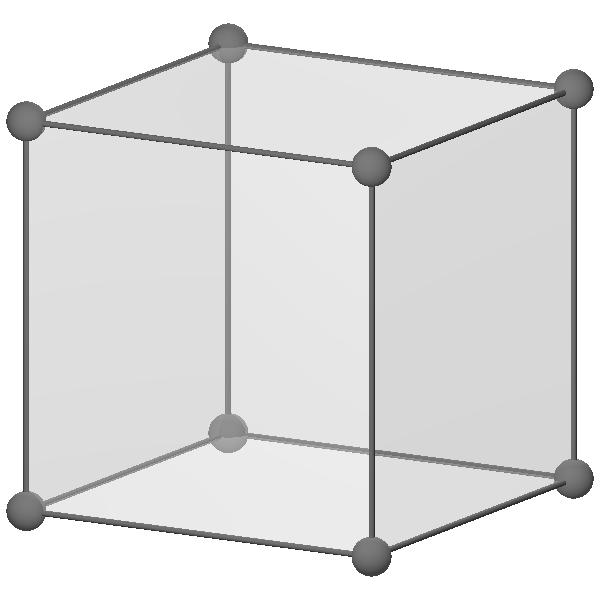}\quad%
 \raise.5in\hbox{$\S_2\Lambda^0$}\includegraphics[width=1in]{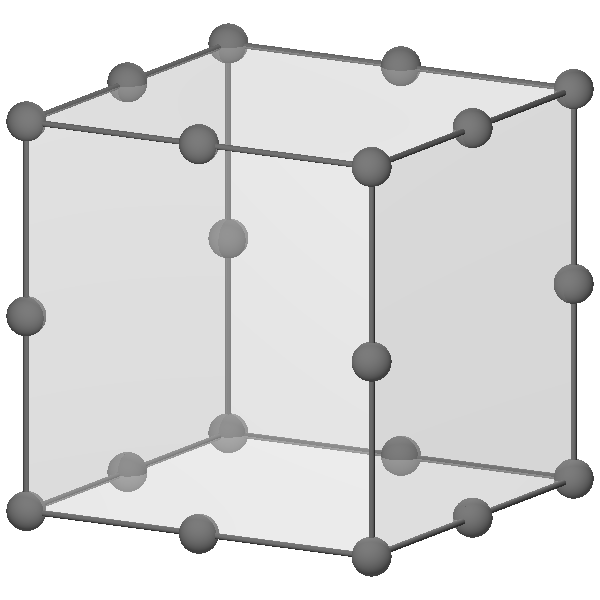}\quad%
 \raise.5in\hbox{$\S_3\Lambda^0$}\includegraphics[width=1in]{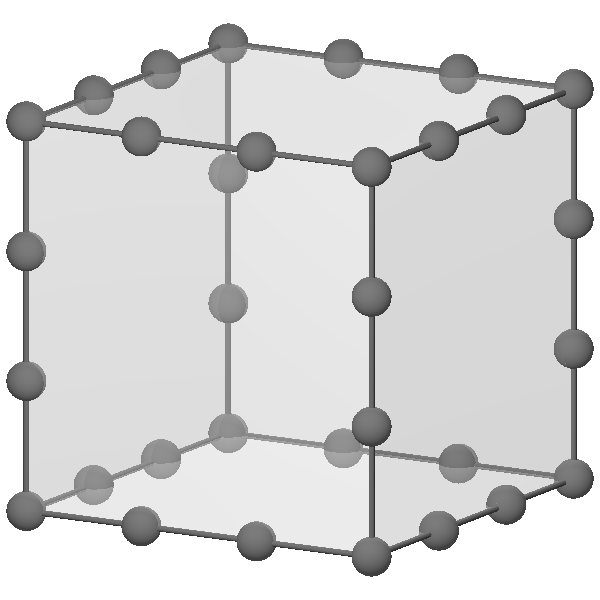}%
}
\centerline{%
 \raise.5in\hbox{$k=1$}\qquad
 \raise.5in\hbox{$\S_1\Lambda^1$}\includegraphics[width=1in]{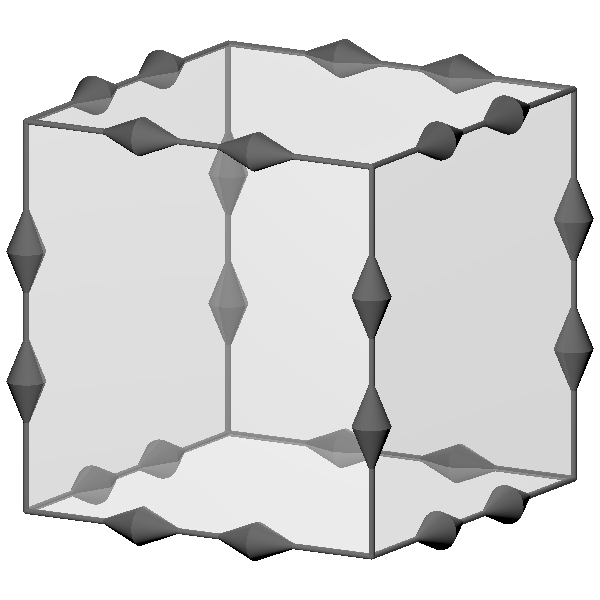}\quad%
 \raise.5in\hbox{$\S_2\Lambda^1$}\includegraphics[width=1in]{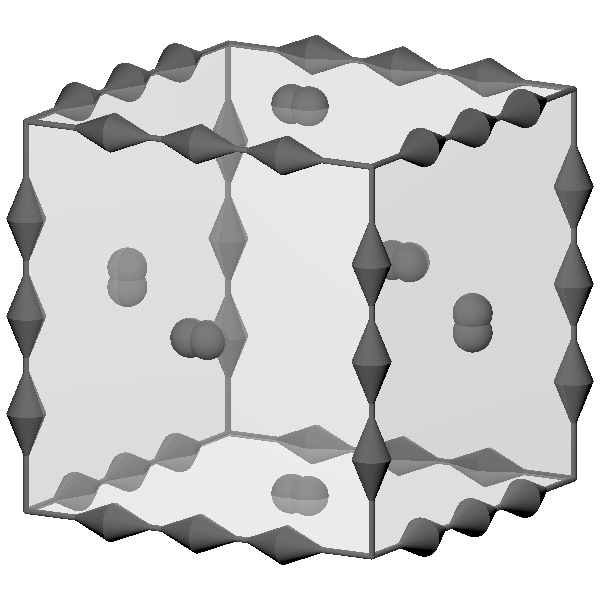}\quad%
 \raise.5in\hbox{$\S_3\Lambda^1$}\includegraphics[width=1in]{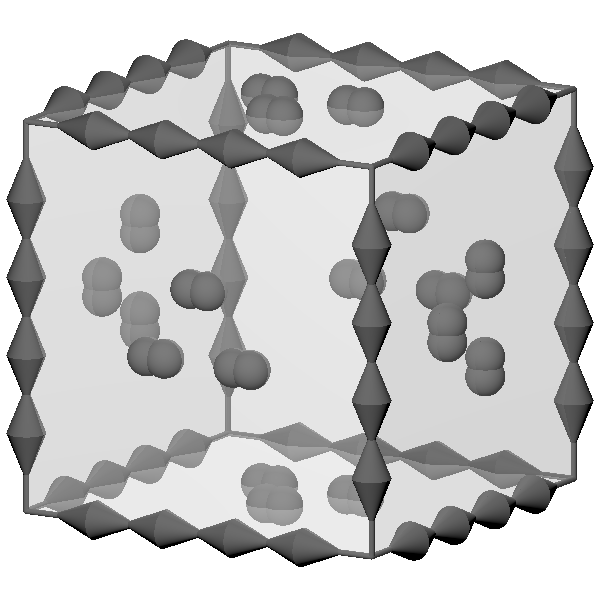}%
}
\centerline{%
 \raise.5in\hbox{$k=2$}\qquad
 \raise.5in\hbox{$\S_1\Lambda^2$}\includegraphics[width=1in]{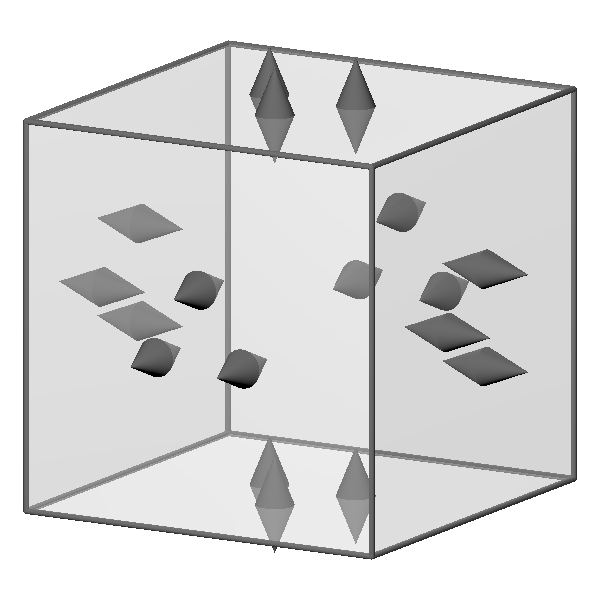}\quad%
 \raise.5in\hbox{$\S_2\Lambda^2$}\includegraphics[width=1in]{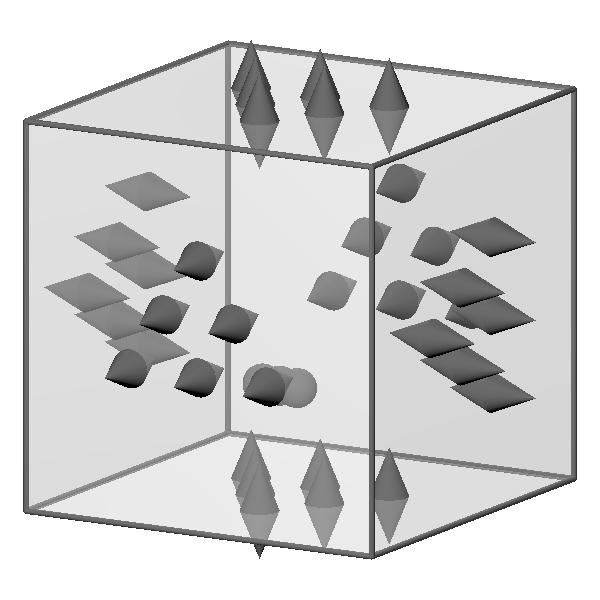}\quad%
 \raise.5in\hbox{$\S_3\Lambda^2$}\includegraphics[width=1in]{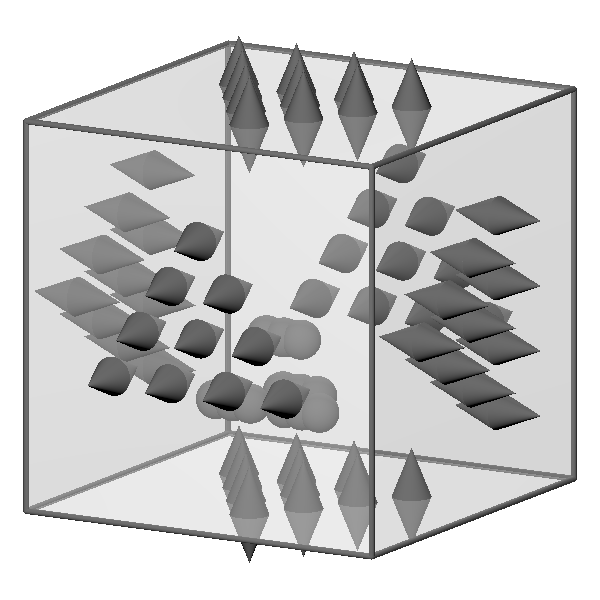}%
}
\centerline{%
 \raise.5in\hbox{$k=3$}\qquad
 \raise.5in\hbox{$\S_1\Lambda^3$}\includegraphics[width=1in]{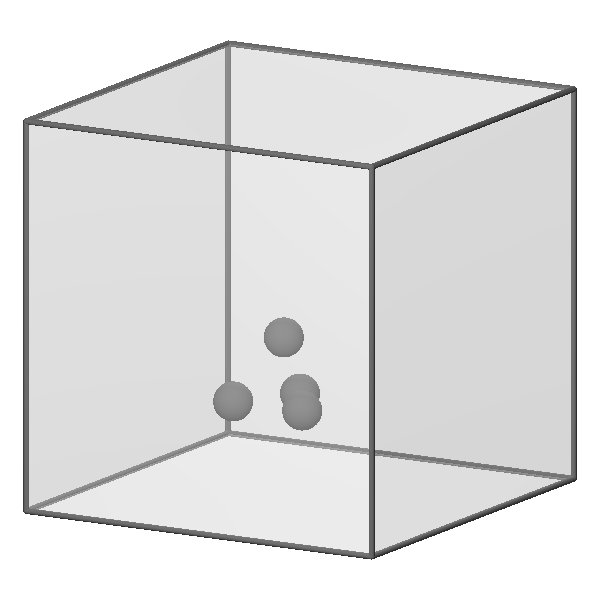}\quad%
 \raise.5in\hbox{$\S_2\Lambda^3$}\includegraphics[width=1in]{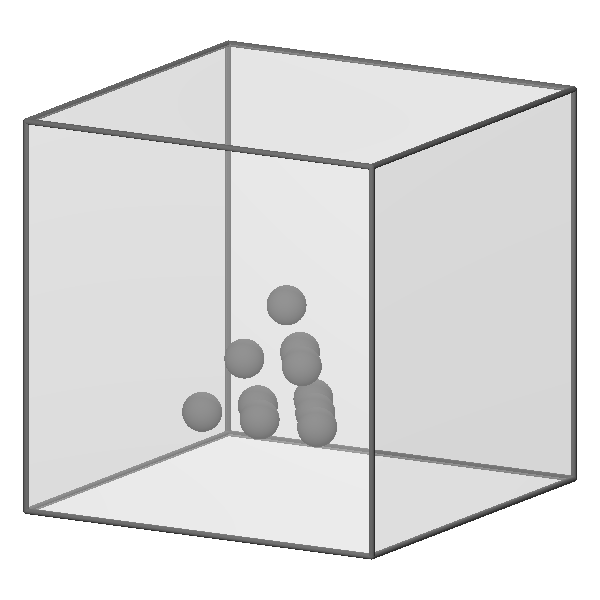}\quad%
 \raise.5in\hbox{$\S_3\Lambda^3$}\includegraphics[width=1in]{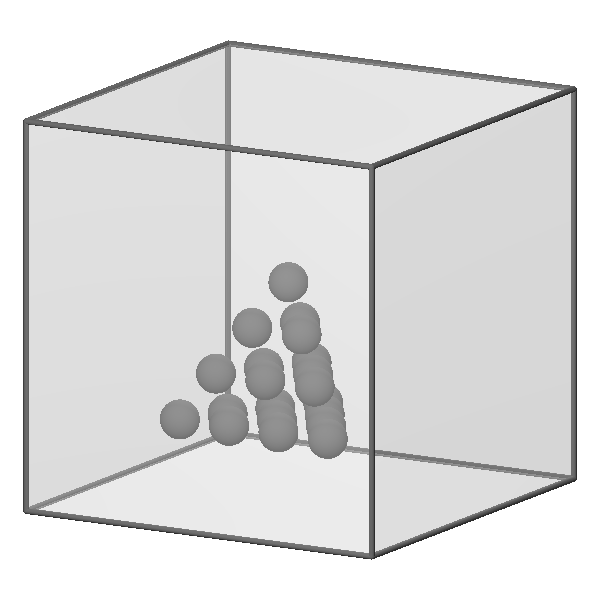}%
}
\leftline{Legend: symbols represent the value or moment of the indicated quantities:}
\leftline{\raise.25pt\hbox{\includegraphics[height=6pt]{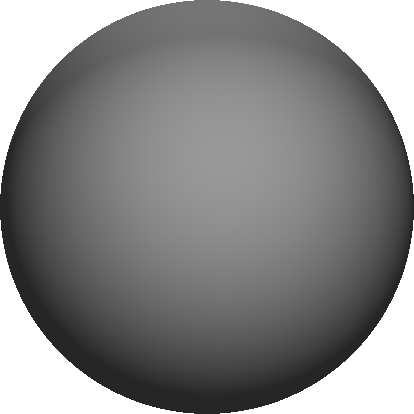}}~--~scalar (1 DOF); \quad
\raise.25pt\hbox{\includegraphics[height=6pt]{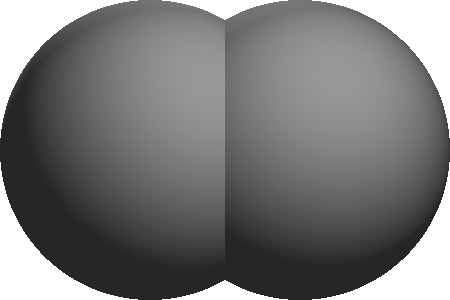}}~--~tangential vector field (2 DOFs);}
\leftline{\raise.25pt\hbox{\includegraphics[height=6pt]{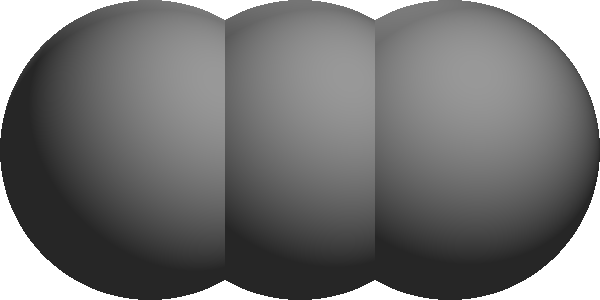}}~--~vector field (3 DOFs); \quad
\raise.25pt\hbox{\includegraphics[height=6pt]{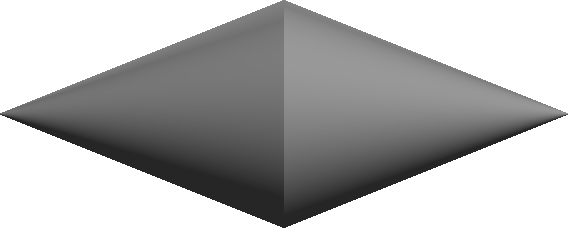}}~--~tangential
component of vector field on edge}
\leftline{or normal component on face (1 DOF).}
\caption{Element diagrams for $\S_r\Lambda^k$ in three dimensions for $r\le 3$.}\label{elts}
\end{figure}

\begin{table}
\caption{Dimension of $\S_r\Lambda^k(I^n)$.}\label{dims}
\begin{tabular}{r|rrrrrrrr}
 \multicolumn{8}{c}{$r$}\\
$k$ &  \qquad 1 & \qquad 2 & \qquad 3 & \qquad 4 & \qquad 5 & \qquad 6 & \qquad 7 \\
\hline
\\[1ex]
\hline
 \llap{$n=1$\qquad}  0 & 2 & 3 & 4 & 5 & 6 & 7 & 8 \\
 1 & 2 & 3 & 4 & 5 & 6 & 7 & 8 \\
\hline
\\[1ex]
\hline
 \llap{$n=2$\qquad} 0 & 4 & 8 & 12 & 17 & 23 & 30 & 38 \\
 1 & 8 & 14 & 22 & 32 & 44 & 58 & 74 \\
 2 & 3 & 6 & 10 & 15 & 21 & 28 & 36\\
\hline
\\[1ex]
\hline
 \llap{$n=3$\qquad} 0 & 8 & 20  & 32 & 50 & 74 & 105 & 144 \\
 1 & 24 & 48 & 84 & 135 & 204 & 294 & 408 \\
 2 & 18 & 39 & 72 & 120 & 186 & 273 & 384 \\
 3 & 4 & 10 & 20 & 35 & 56 & 84 & 120\\
\hline
\\[1ex]
\hline
 \llap{$n=4$\qquad} 0 & 16 & 48 & 80 & 136 & 216 & 328 & 480 \\
 1 & 64 & 144 & 272 & 472 & 768 & 1188 & 1764 \\
 2 & 72 & 168 & 336 & 606 & 1014 & 1602 & 2418 \\
 3 & 32 & 84 & 180 & 340 & 588 & 952 & 1464 \\
 4 & 5 & 15 & 35 & 70 & 126 & 210 & 330
\\
\hline
\multicolumn{2}{c}{}
\end{tabular}
\end{table}

Since the development of the first stable mixed finite elements for the Poisson equation
by Raviart and Thomas in 1977, such elements have proven to be
powerful tools for numerical computation.  Their paper \cite{Raviart-Thomas} introduces
a family of finite element discretizations of the space $H(\div,\Omega)$ for a two-dimensional domain
$\Omega$, one for each polynomial degree.
Together with a corresponding discontinuous piecewise polynomial discretization of $L^2(\Omega)$,
these RT spaces
stably discretize the mixed variational formulation of the Poisson equation on $\Omega$.
In \cite{Raviart-Thomas}, versions of the RT elements were given both for meshes of $\Omega$ by triangles and by rectangles.
Besides the original application of mixed finite elements for the Poisson equation, the RT elements
can be used together with the standard Lagrange finite element discretization of $H^1(\Omega)$
to give a stable mixed finite element discretization of the vector Poisson equation
$\curl\curl u - \grad\div u = f$, in which the vector variable $u$ is sought in $H(\div)$
and approximated by RT elements, and the scalar variable $\sigma = \curl u$ is sought in $H^1$
and approximated by Lagrange elements.
The RT elements were generalized to three dimensions by N\'ed\'elec \cite{Nedelec1}, with separate
generalizations giving discretizations of $H(\div,\Omega)$ and of $H(\curl,\Omega)$, including both
tetrahedral and cubic mesh generalizations of each.

In \cite{Brezzi-Douglas-Marini}, Brezzi, Douglas, and
Marini introduced a second family of finite element discretizations of $H(\div,\Omega)$ in two dimensions,
for both triangular and rectangular meshes.  These BDM elements have also proven to be very useful.
N\'ed\'elec \cite{Nedelec2} generalized the BDM family to tetrahedral meshes in three dimensions, giving
analogues both for $H(\div)$ and $H(\curl)$.  He also defined $H(\div)$ and $H(\curl)$ elements 
on cubic meshes in \cite{Nedelec2}.  However, these cannot really be considered analogues of the 
BDM elements, as they do not seem to lead to stable mixed finite element pairs.  The generalization
of the BDM elements to $H(\div)$  on three-dimensional domains (but not $H(\curl)$),
was also made by Brezzi, Douglas, Dur\'an, and Fortin in
\cite{Brezzi-Douglas-Duran-Fortin}.\footnote{The finite element discretization of $H(\div)$ in \cite{Brezzi-Douglas-Duran-Fortin}
is identical with that of \cite{Nedelec2}
in the case of tetrahedral meshes.  However, the interior degrees of freedom differ.  This does not
affect the stability analysis for the mixed Poisson equation, but only the ones given by \cite{Nedelec2}
can be used to establish stability when discretizing a mixed formulation
of the vector Laplacian.}  The paper \cite{Brezzi-Douglas-Duran-Fortin} also introduced an analogue of the
rectangular BDM elements to cubic meshes in three dimensions.

The finite element exterior calculus \cite{acta,bulletin} has greatly clarified the relation of many of
these mixed finite element methods.  
In exterior calculus the space $H(\div)$ is viewed as a space
of differential forms of degree $n-1$ in $n$ dimensions, while $H(\curl)$ is a space of $1$-forms,
$H^1$ a space of $0$-forms, and $L^2$ a space of $n$-forms.  These spaces are connected via the de~Rham
complex, in which the fundamental differential operators  $\grad$, $\curl$, $\div$ (and others in
higher dimensions) are unified as the exterior derivative.
We refer the reader to Table~2.2 in \cite{acta} for a summary of
correspondences between differential forms
and vector fields.  In \cite{acta,bulletin} two fundamental
families of finite element differential forms are defined on simplicial meshes in $n$ dimensions.
Tables~5.1 and 5.2 in \cite{acta} summarize the correspondences between these spaces
of finite element differential forms and
classical finite element spaces in two and three dimensions. 
The $\P_r^-\Lambda^k$ family specializes to the Lagrange elements, the RT elements, and the fully discontinuous
polynomial elements, for $k=0$, $1$, and $2$, respectively, in two dimensions, and to the Lagrange elements,
N\'ed\'elec's generalizations of the RT elements to $H(\curl)$ and to $H(\div)$, and the discontinuous
elements for $k=0,\ldots,3$ in three dimensions.  Taken together, these spaces form a complex
which is a finite element subcomplex of the de~Rham complex:
$$
0\to \P_r^-\Lambda^0 \xrightarrow{d}\P_r^-\Lambda^1  \xrightarrow{d}\cdots\xrightarrow{d}\P_r^-\Lambda^n\to 0.
$$
The second family of finite elements discussed in
\cite{acta,bulletin} is the $\P_r\Lambda^k$ family.  For $0$-forms and $n$-forms this
brings nothing new, just recapturing the Lagrange and fully discontinuous polynomial elements,
but for $0<k<n$, this is a different family of finite element spaces.  For $n=2$, $k=1$
it gives the BDM triangular elements, and for $n=3$, $k=1$ and $2$, it gives N\'ed\'elec's
generalizations of these to $H(\div)$ and $H(\curl)$.  They combine into a second finite element
de~Rham subcomplex
$$
0\to \P_r\Lambda^0 \xrightarrow{d}\P_{r-1}\Lambda^1  \xrightarrow{d}\cdots\xrightarrow{d}\P_{r-n}\Lambda^n\to 0.
$$
Note that the degree decreases in this complex, in contrast to the preceeding one.

We now turn to the construction of analogous spaces and complexes for cubical meshes.
An analogue to the $\P_r^-\Lambda^k$ complex of elements for cubical meshes may be easily
constructed via a tensor product construction.  For an explicit description in $n$ dimensions
and for all form degrees $k$, see \cite{arnold-boffi-bonizzoni}.  This includes the tensor product
Lagrange, or $\Q_r$, elements for $0$-forms, the rectangular
RT elements for $1$-forms in $2$-D, and the 3-D generalizations of them given in \cite{Nedelec1}.
The space for $n$-forms ($L^2$) is the fully discontinuous space of tensor product polynomials
(shape functions in $\Q_r$).

This paper develops a second family of finite element spaces for cubical meshes.  This family
may be viewed as an analogue of the $\P_r\Lambda^k$ family for cubical meshes.  We denote
the new family of elements by $\S_r\Lambda^k$ and define such a space for
all dimensions $n\ge 1$, all polynomial degrees $r\ge 1$, and all form degrees
$0\le k\le n.$  For $0$-forms
(discretization of $H^1$), $S_r\Lambda^0$ is not equal to $\Q_r$, but rather to the serendipity
elements in 2-D, their three-dimensional extension (which can be found in many places
for small values of $r$ and for general $r$ in \cite{MR1164869} and \cite{lin-zhang}),
and to a recent extension to all dimensions \cite{serendipity}.
For $n$-forms, $\S_r\Lambda^n$ uses fully discontinuous elements with shape functions in $\P_r$
(not $\Q_r$).

For $1$-forms in 2-D, the space $\S_r\Lambda^1$ coincides with the rectangular BDM elements
of \cite{Brezzi-Douglas-Marini}.  For $2$-forms
in 3-D, we believe that the space $\S_r\Lambda^2$, which has not appeared
before as far as we know, is the correct analogue of the BDM elements on
cubic meshes.  It has the same degrees of freedom as the space given in \cite{Brezzi-Douglas-Duran-Fortin}
but the shape functions have better symmetry properties.  For $1$-forms in 3-D,
$\S_r\Lambda^1$ is a finite element discretization of $H(\curl)$.  To the best of our knowledge,
neither the degrees of freedom nor the shape functions for this space
have been proposed previously.  Even for $0$-forms, the spaces $\S_r\Lambda^0$ were
only discovered very recently
in higher dimensions.  They are the generalization of the
serendipity
spaces given by the present authors in \cite{serendipity}.  That work was motivated
by the search for a finite element discretization of the de~Rham complex on cubical meshes
which is completed in this paper.

In the remainder of the paper we will develop the $\S_r\Lambda^k$ spaces and their properties
in the setting of differential forms on cubical meshes of arbitrary dimensions.
Here we explicitly describe the spaces which arise in three dimensions in traditional finite element
terminology by giving their shape functions and degrees of freedom.  See also Figure~\ref{elts}.

\emph{The space $\S_r\Lambda^3$.}
This is simply the space of piecewise polynomials of degree at most $r$
with no continuity requirements.  Obviously this gives a finite element subspace of $L^2$
(which, unlike the remaining spaces, is defined also for $r=0$).  Its dimension is
$(r+1)(r+2)(r+3)/6$.

\emph{The space $\S_r\Lambda^2$.}
The shape functions for the space $\S_r\Lambda^2$ on the unit cube
$I^3$ in three dimensions are vector polynomials
of the form
\begin{equation}\label{newbdm}
u = (v_1,v_2,v_3)  + \curl (x_2x_3(w_2-w_3),x_3x_1(w_3-w_1),x_1x_2(w_1-w_2)),
\end{equation}
with $v_i, w_i\in \P_r(I^3)$ and  $w_i$ independent of $x_i$.
The dimension of this space is $(r+1)(r^2+5r+12)/2$.  As degrees of freedom for
$u\in \S_r\Lambda^2$ we take
$$
u\mapsto \int_f u\cdot n\,p, \ p\in \P_r(f), \  f \text{ a face of $I^3$}; \quad
 u\mapsto \int_{I^3} u\cdot p, \ p\in [\P_{r-2}(I^3)]^3.
$$
These degrees of freedom 

\emph{The space $\S_r\Lambda^1$.}
The shape functions for the space $\S_r\Lambda^1$
in three dimensions are vector polynomials
of the form
\begin{equation}\label{newcurl}
u = (v_1,v_2,v_3)  + (x_2x_3(w_2-w_3),x_3x_1(w_3-w_1),x_1x_2(w_1-w_2)) +\grad s,
\end{equation}
with $v_i\in \P_r(I^3)$, $w_i\in \P_{r-1}(I^3)$ independent of $x_i$, and
$s$ is a polynomial of $I^3$ with superlinear degree at most $r+1$,
where the superlinear degree of a polynomial is the ordinary degree ignoring
variables which enter linearly (e.g., the superlinear degree of $x_1^2 x_2 x_3^3$ is $5$).
The dimension of this space is $(r+1)(r^2+5r+18)/2$.  As degrees of freedom for $u\in \S_r\Lambda^1$ 
we take
\begin{gather*}
u\mapsto \int_e u\cdot t\,p, \ p\in \P_r(e), \  e \text{ an edge of $I^3$};
\\
u\mapsto \int_f u\x n\,p, \ p\in [\P_{r-2}(f)]^2, \  f \text{ a face of $I^3$}; \quad
 u\mapsto \int_{I^3} u\cdot p, \ p\in [\P_{r-4}(I^3)]^3.
\end{gather*}

\emph{The space $\S_r\Lambda^0$.}
Finally, the space $S_r\Lambda^0$ is the generalized serendipity space of \cite{serendipity}.
The shape functions are all polynomials of superlinear degree at most $r$, and the degrees
of freedom are the values at the vertices and the moments of degree at most $r-2$, $r-4$, and
$r-6$ on the edges, faces, and interior, respectively.  The dimension of this space is
$8$ for $r=1$, $20$ for $r=2$, and $(r+1)(r^2+5r+24)/6$ for $r\ge 3$.

As a consequence of the general theory below, the degrees of freedom given are unisolvent
for all these spaces,
and for any cubical decomposition $\T$ of $\Omega$,
the assembled finite element spaces $\S_r\Lambda^k(\T)$ have exactly the continuity required to belong to
$H^1(\Omega)$, $H(\div,\Omega)$, $H(\curl,\Omega)$, and $L^2(\Omega)$, for $k=0,\ldots,3$, respectively.
In other words, $\S_r\Lambda^k(\T)$ belongs to the domain of the exterior derivative on $k$-forms.
Moreover, exterior derivative maps $\S_r\Lambda^k(\T)$ into $\S_{r-1}\Lambda^{k+1}(\T)$, so
we obtain a finite element subcomplex
$$
0\to \S_r\Lambda^0(\T)\xrightarrow{\grad} \S_{r-1}\Lambda^1(\T)\xrightarrow{\curl}
\S_{r-2}\Lambda^2(\T)\xrightarrow{\div}\S_{r-3}\Lambda^3(\T)\to 0
$$
of the de~Rham complex.
Finally, if we define projection operators $\pi^k$ from smooth fields into
the finite element spaces $S_r\Lambda^k$ using the degrees of freedom,
these commute with exterior differentiation.  That is,
the following diagram commutes:
$$
\begin{CD}
0 @>>> C^\infty(\Omega) @>\grad>> [C^\infty(\Omega)]^3  @>\curl>> [C^\infty(\Omega)]^3  @>\div>> C^\infty(\Omega) @>>>0
\\
@. @V\pi^0VV  @V\pi^1VV @V\pi^2VV  @V\pi^3VV
\\
0 @>>> \S_r\Lambda^0(\T) @>\grad>> \S_{r-1}\Lambda^1(\T)  @>\curl>> \S_{r-2}\Lambda^2(\T) @>\div>> \S_{r-3}\Lambda^3(\T) @>>>0
\end{CD}
$$
where, for simplicity, we assume $r\ge 3$ (otherwise some of the spaces are undefined and some
parts of the diagram are not applicable).

The remainder of the paper consists of four sections.  In Section~2, we recall some key concepts from
exterior calculus, particularly the Koszul differential and Koszul complex of polynomial differential forms,
which will be crucial to our construction.
We introduce the concept of linear degree, and show that the subcomplex obtained from the Koszul complex
by placing a lower bound on linear degree is exact.  This is a key step in the unisolvence proof
in Section~3.  In Section~3, we define the spaces $\S_r\Lambda^k$ on an
$n$-dimensional cube by giving the
shape functions and degrees of freedom.  We derive a number of properties from these definitions,
leading to a formula for the dimension of the space of shape functions and a proof of unisolvence
of the degrees of freedom.  
We then define projection operators $\pi_r^k$ mapping smooth fields into the finite element spaces $\S_r \Lambda^k$
and show that the projections commute with the exterior derivative.
This accomplished, the new elements fit squarely into the framework of the finite element
exterior calculus given in \cite{bulletin}.  Therefore the application of the elements
to PDE problems and their numerical analysis does not require
new ideas, and so we do not discuss that here.
The proof of unisolvence in Section~3 hinges on the special case of functions with vanishing trace.
The proof in that case is the topic of Section~4.

\section{Notation and preliminaries}
For $n\ge1$, the number of dimensions, let $\N_n=\{1,\ldots,n\}$,
and let $\Sigma(k)$, $0\le k\le n$, denote the set of subsets of $\N_n$ consisting of $k$ elements.
For $\sigma\in\Sigma(k)$ we denote by $\sigma^*\in\Sigma(n-k)$ its complement $\N_n\setminus\sigma$.
For $p\in\sigma$ we write $\sigma-p$ for $\sigma\setminus\{p\}\in\Sigma(k-1)$,
and for $q\in\sigma^*$ we write $\sigma+q$ for $\sigma\cup\{q\}\in\Sigma(k+1)$.
If $\sigma\subset\N_n$ and $q\in\sigma^*$ we let
$\epsilon(q,\sigma)=(-1)^l$
where $l=\#\{p\in\sigma|p<q\}$, and set 
$\epsilon(q,p,\sigma):=\epsilon(q,\sigma)\epsilon(p,\sigma+q-p)$ for $q\in\sigma^*$, $p\in\sigma$.
For later reference, we note that
\begin{equation}\label{epseps}
 \epsilon(q,p,\sigma)=-\epsilon(p,\sigma-p)\epsilon(q,\sigma-p),
\end{equation}
which is easily verified by considering the cases $q<p$ and $q>p$ separately.

We now recall some basic tools and results of exterior algebra and exterior calculus.
These can be found, for example, in \cite[Section~2]{acta}.
For each $\sigma\in\Sigma(k)$ we denote by $\sigma_1,\ldots,\sigma_k$ its elements
in increasing order, and by
$$
dx_\sigma=dx_{\sigma_1}\wedge\cdots\wedge dx_{\sigma_k}
$$
the corresponding basic alternator.
A differential $k$-form $\omega$ on a domain $\Omega\subset\R^n$ may be written as
\begin{equation}\label{kform}
\omega = \sum_{\sigma\in\Sigma(k)} \omega_\sigma\,dx_\sigma,
\end{equation}
with coefficients $\omega_\sigma$ belonging to any desired space of functions
on $\Omega$, e.g., $L^2(\Omega)$.
A $0$-form is simply such a function.  The exterior derivative of the differential form
\eqref{kform} is
$$
d\omega = \sum_{\sigma\in\Sigma(k)} \sum_{q\in\N_n} \partial_q\omega_\sigma\,dx_q\wedge dx_\sigma,
$$
where $\partial_q=\partial/\partial x_q$.

A differential $k$-form may be
contracted with a vector field on $\Omega$ to give a differential $(k-1)$-form
(or zero if $k=0$).  When the vector field is simply the identity, the resulting
operator is the Koszul differential.  
Equivalently, we may define the Koszul differential
on the basic alternators by 
$$
\kappa(dx_{\sigma_1}\wedge\cdots\wedge dx_{\sigma_k}) =  \sum_{i=1}^k (-1)^{i+1} x_{\sigma_i}\,
dx_{\sigma_1}\wedge\cdots\wedge\widehat{dx_{\sigma_i}}\wedge\cdots\wedge dx_{\sigma_k},
$$
and then extend it to a general differential form
by linearity:
\begin{equation}\label{defk}
\kappa(\sum_\sigma \omega_\sigma\,dx_\sigma) =  \sum_\sigma \omega_\sigma\, \sum_{i=1}^k (-1)^{i+1} x_{\sigma_i}\,
dx_{\sigma_1}\wedge\cdots\wedge\widehat{dx_{\sigma_i}}\wedge\cdots\wedge dx_{\sigma_k}.
\end{equation}
The operator $\kappa$ is a graded differential, meaning that
$$
\kappa(\kappa\omega) =0,\quad
\kappa(\omega\wedge\eta) = \kappa\omega\wedge\eta + (-1)^k\omega\wedge\kappa\eta,
$$
if $\omega$ is a $k$-form and $\eta$ an $l$-form.

The following lemma collects formulas for $\kappa\omega$, $d\omega$, and $\kappa d\omega$.
\begin{lemma}\label{kdkd} If $\omega$ is given by \eqref{kform}, then
\begin{gather}\label{kformula}
\kappa\omega = \sum_{\zeta\in\Sigma(k-1)}\eta_\zeta\,dx_\zeta,
\text{ where }
\eta_\zeta = \sum_{q\in\zeta^*}\epsilon(q,\zeta) x_q \omega_{\zeta+q},
\\\label{dformula}
d\omega = \sum_{\rho\in\Sigma(k+1)}\nu_\rho\,dx_\rho,
\text{ where }
\nu_\rho = \sum_{q\in\rho}\epsilon(q,\rho-q) \partial_q \omega_{\rho-q},
\\\label{kdformula}
\kappa d\omega = \sum_{\sigma\in\Sigma(k)}\mu_\sigma\,dx_\sigma,
\text{ where }
\mu_\sigma = \sum_{q\in\sigma^*}\bigl[x_q\partial_q\omega_\sigma +
\sum_{p\in\sigma}\epsilon(q,p,\sigma)x_q\partial_p\omega_{\sigma+q-p}\bigr].
\end{gather}
\end{lemma}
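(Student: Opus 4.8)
The plan is to establish the three identities \eqref{kformula}, \eqref{dformula}, and \eqref{kdformula} by direct computation, in that order, each building on its predecessor. Throughout, the only real work is sign bookkeeping, so I would first record the basic fact that if $q\in\sigma^*$ then
$$
dx_q\wedge dx_\sigma = \epsilon(q,\sigma)\,dx_{\sigma+q},
$$
which follows because moving $dx_q$ into increasing position requires commuting it past exactly the $\#\{p\in\sigma\mid p<q\}$ factors $dx_p$ with $p<q$, each commutation contributing a factor $-1$, and this count is $\epsilon(q,\sigma)$ by definition.

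For \eqref{kformula}, I would apply the definition \eqref{defk} of $\kappa$ to each summand $\omega_\sigma\,dx_\sigma$. Writing $\sigma_1<\cdots<\sigma_k$ for the elements of $\sigma$, the $i$th term of \eqref{defk} is $(-1)^{i+1}x_{\sigma_i}\,dx_{\sigma-\sigma_i}$. Setting $\zeta=\sigma-\sigma_i\in\Sigma(k-1)$ and $q=\sigma_i\in\zeta^*$, so that $\sigma=\zeta+q$, one has $i-1=\#\{p\in\zeta\mid p<q\}$, whence $(-1)^{i+1}=(-1)^{i-1}=\epsilon(q,\zeta)$. Regrouping the resulting double sum by the value of $\zeta$ — each $\sigma$ contributing to a given $\zeta$ being of the form $\zeta+q$ for a unique $q\in\zeta^*$ — yields $\eta_\zeta=\sum_{q\in\zeta^*}\epsilon(q,\zeta)x_q\omega_{\zeta+q}$.

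For \eqref{dformula}, I would start from $d\omega=\sum_{\sigma\in\Sigma(k)}\sum_{q\in\N_n}\partial_q\omega_\sigma\,dx_q\wedge dx_\sigma$, discard the terms with $q\in\sigma$ (for which $dx_q\wedge dx_\sigma=0$), apply the displayed commutation identity to the remaining terms, and regroup by $\rho=\sigma+q\in\Sigma(k+1)$, with $q$ now ranging over $\rho$ and $\sigma=\rho-q$; this gives $\nu_\rho=\sum_{q\in\rho}\epsilon(q,\rho-q)\partial_q\omega_{\rho-q}$. Finally, for \eqref{kdformula}, I would apply \eqref{kformula} — with $k$ replaced by $k+1$ — to the $(k+1)$-form $d\omega$ just computed, obtaining $\mu_\sigma=\sum_{q\in\sigma^*}\epsilon(q,\sigma)x_q\nu_{\sigma+q}$, then substitute the expression for $\nu_{\sigma+q}$ from \eqref{dformula} and split its inner sum over $p\in\sigma+q$ into the single term $p=q$ and the terms $p\in\sigma$. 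The term $p=q$ contributes $\epsilon(q,\sigma)^2 x_q\partial_q\omega_\sigma=x_q\partial_q\omega_\sigma$, while for $p\in\sigma$ the coefficient is $\epsilon(q,\sigma)\,\epsilon(p,\sigma+q-p)=\epsilon(q,p,\sigma)$ by the definition of $\epsilon(q,p,\sigma)$, producing $\epsilon(q,p,\sigma)x_q\partial_p\omega_{\sigma+q-p}$; combining the two gives the asserted formula for $\mu_\sigma$. I expect no genuine obstacle: the whole argument is elementary reindexing, and the only points demanding care are the two sign computations — the identification $i-1=\#\{p\in\zeta\mid p<q\}$ in the $\kappa$ step and the commutation identity $dx_q\wedge dx_\sigma=\epsilon(q,\sigma)dx_{\sigma+q}$ in the $d$ step — which are exactly what cause the combinatorial factors $\epsilon$ to emerge.
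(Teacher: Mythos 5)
Your proposal is correct and follows essentially the same route as the paper: compute $\kappa$ on basic alternators with the sign identification $(-1)^{i+1}=\epsilon(q,\sigma-q)$, reindex by $\zeta=\sigma-q$ (resp.\ $\rho=\sigma+q$ for $d$), and obtain \eqref{kdformula} by composing the first two formulas, with the case $p=q$ giving the diagonal term and the definition of $\epsilon(q,p,\sigma)$ absorbing the product of signs. You simply spell out the details that the paper compresses into ``the second result is proven similarly, and the third follows from the first two.''
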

\begin{proof}
 By definition
$$
\kappa dx_\sigma = \sum_{q\in\sigma}\epsilon(q,\sigma-q)x_q\, dx_{\sigma-q},
$$
so
$$
\kappa \omega = \sum_{\sigma\in\Sigma(k)}\sum_{q\in\sigma}
  \epsilon(q,\sigma-q)x_q\omega_\sigma\, dx_{\sigma-q}.
$$
Making the change of variable $\zeta  = \sigma-q$, so
$$
\sigma\in\Sigma(k), \quad q\in\sigma \quad\iff\quad \zeta\in\Sigma(k-1),\quad q\in\zeta^*,
$$
we obtain \eqref{kformula}.  The second result is proven similarly, and the third follows from
the first two.
\end{proof}

We now turn to differential forms with polynomial coefficients.
A monomial in $n$ variables is determined by
a multi-index $\alpha$ of $n$ nonnegative integers: $x^\alpha = x_1^{\alpha_1}\cdots x_n^{\alpha_n}$.
By a form monomial in $n$ variables, we mean the product of a monomial with a basic alternator:
$m = x^\alpha \,dx_\sigma$ for some multi-index $\alpha$ and $\sigma\subset\N_n$.
The polynomial degree and the linear degree of $m$ are defined as
$$
\deg m = |\alpha|:=\sum_i \alpha_i, \quad
\ldeg m = \# \{\,i\in\sigma^*\,|\,\alpha_i=1\}.
$$
Thus the linear degree of $m$ is the degree of its polynomial coefficient counting only those variables
which enter linearly, and excluding variables which enter the alternator.  For ordinary monomials,
i.e., 0-forms, the linear degree is equal to the difference between the polynomial degree
and the superlinear degree which appeared in the introduction.

We define $\H_r\Lambda^k=\H_r\Lambda^k(\R^n)$ to be the span of the $k$-form monomials $m$ with $\deg m = r$,
and $\P_r\Lambda^k=\P_r\Lambda^k(\R^n)=\sum_{s=0}^r \H_s\Lambda^k$
to be the span of those with $\deg m \le r$.  If $k=0$, we may simply write $\H_r$ and $\P_r$.
If $\Omega$ is a subdomain of $\R^n$, we define $\P_r\Lambda^k(\Omega)$
to be the space of restrictions to $\Omega$ of the elements of $\P_r\Lambda^k$ (and similarly for other
function spaces).  Note that $d$ maps $\H_r\Lambda^k$ into $\H_{r-1}\Lambda^{k+1}$ while
$\kappa$ maps $\H_r\Lambda^k$ into $\H_{r+1}\Lambda^{k-1}$.
An extremely useful identity is the \emph{homotopy formula} (\cite[Theorem~3.1]{acta}):
\begin{equation}\label{hmt}
(d\kappa + \kappa d)\omega = (r+k)\omega, \quad \omega\in\H_r\Lambda^k(\R^n). 
\end{equation}

We extend the linear degree for form monomials to
polynomial differential forms by defining $\ldeg\mu$ for any  $\mu\in\H_r\Lambda^k$
to be the \emph{minimum} of the
linear degree among all the monomials $m$ in
$\mu$.  We say that $\mu$ is of homogeneous linear degree equal to $l$ if $\ldeg m=l$ for every monomial
of $\mu$.  We denote by $\H_{r,l}\Lambda^k$ the space of forms in $\H_r\Lambda^k$ of linear degree at least $l$.
Obviously,
\begin{equation}\label{lmax}
\H_{r,l}\Lambda^k(\R^n)=0, \quad l>\min(r,n-k).
\end{equation}

The exterior derivative $d$ may decrease the linear degree of a polynomial differential form,
but $\kappa$ and $\kappa d\kappa$ do not.
\begin{lemma}\label{degs}
For any $\omega\in\P_r\Lambda^k$, $\ldeg\kappa\omega\ge\ldeg\omega$ and $\ldeg\kappa d\kappa\omega\ge\ldeg\omega$.
\end{lemma}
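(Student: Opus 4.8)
The plan is to reduce the first inequality to the action of $\kappa$ on a single form monomial, and then to obtain the second inequality almost for free from the first by using the homotopy formula \eqref{hmt} to rewrite $\kappa d\kappa$, on each homogeneous piece, as multiplication by a scalar.

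First I would prove $\ldeg\kappa\omega\ge\ldeg\omega$. Since $\ldeg$ of a polynomial form is the minimum of $\ldeg$ over the monomials occurring in it and $\kappa$ is linear, it suffices to check that for each form monomial $m=x^\alpha\,dx_\sigma$ appearing in $\omega$, every monomial occurring in $\kappa m$ has linear degree at least $\ldeg m$; cancellations that occur when the various $\kappa m$ are added together can only raise the minimum. By the definition \eqref{defk}, a monomial of $\kappa m$ is, up to sign, obtained from $x^\alpha\,dx_\sigma$ by removing an index $p\in\sigma$ from the alternator and multiplying the coefficient by $x_p$. The complement of $\sigma-p$ is $\sigma^*\cup\{p\}$; over $\sigma^*$ the exponents are unchanged, so those variables contribute exactly $\ldeg m$ to the linear degree of the new monomial, while $x_p$ contributes an extra $1$ if $\alpha_p=0$ (its exponent becomes exactly $1$) and an extra $0$ otherwise. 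Hence the new monomial has linear degree $\ge\ldeg m$, and $\ldeg\kappa\omega\ge\ldeg\omega$ follows.

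For the second inequality, if $k=0$ then $\kappa\omega=0$ and there is nothing to prove, so assume $k\ge1$ and write $\omega=\sum_t\omega^{(t)}$ with $\omega^{(t)}\in\H_t\Lambda^k$, so that $\ldeg\omega=\min_t\ldeg\omega^{(t)}$. Put $\eta_t=\kappa\omega^{(t)}\in\H_{t+1}\Lambda^{k-1}$; then $\kappa\eta_t=\kappa\kappa\omega^{(t)}=0$, so applying the homotopy formula \eqref{hmt} to $\eta_t$ gives $(t+k)\eta_t=(d\kappa+\kappa d)\eta_t=\kappa d\eta_t$, i.e. $\kappa d\kappa\omega^{(t)}=(t+k)\,\kappa\omega^{(t)}$. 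Summing over $t$ yields $\kappa d\kappa\omega=\sum_t(t+k)\,\kappa\omega^{(t)}$; since $t+k\ge1$ no nonzero term is annihilated, and the terms lie in distinct homogeneous degrees $\H_{t+1}\Lambda^{k-1}$, so there is no cancellation among them. Therefore $\ldeg\kappa d\kappa\omega=\min_t\ldeg\kappa\omega^{(t)}$, which by the first part is at least $\min_t\ldeg\omega^{(t)}=\ldeg\omega$. The one point requiring care in the whole argument is the bookkeeping in the first step: when $\kappa$ pulls an index $p$ out of $dx_\sigma$ it simultaneously raises the exponent of $x_p$, so $x_p$ passes from being "inside the alternator" (and hence uncounted) to being either "linear" or "superlinear" — which is exactly why the linear degree can only stay the same or go up by one, never down.
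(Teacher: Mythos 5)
Your proof is correct and follows essentially the same route as the paper: the first inequality is obtained, exactly as in the paper, by checking that each monomial of $\kappa m$ has linear degree $\ldeg m$ or $\ldeg m+1$ (the extracted index $p$ moves from the alternator into the coefficient with exponent $\alpha_p+1$), and the second is obtained by combining $\kappa\kappa=0$ with the homotopy formula to see that $\kappa d\kappa$ acts as a positive scalar on each homogeneous piece of $\omega$ (the paper does this monomial-by-monomial, you do it degree-by-degree, which is an immaterial difference). No gaps.
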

\begin{proof}
 If $m$ is monomial of $\omega$ and $l=\ldeg m$, then it follows directly from
the definition \eqref{defk} that the monomials
of $\kappa m$ are of linear degree $l$ and/or $l+1$, so $\ldeg \kappa m\ge l\ge \ldeg\omega$.
Since every monomial of $\kappa\omega$ is a monomial of $\kappa m$ for some
monomial $m$ of $\omega$, this implies the first inequality.
For the second we use the differential property of $\kappa$ and the homotopy formula to
see that
$\kappa d\kappa m = (\kappa d+d\kappa)\kappa m$ is a multiple of $\kappa m$.  
Therefore $\ldeg \kappa d\kappa m = \ldeg\kappa m\ge\ldeg\omega$, which
gives the second inequality.
\end{proof}
In view of Lemma~\ref{degs}, for each $l\ge0$, we obtain a complex
\begin{equation}\label{lcomplex}
\cdots
\xrightarrow{\kappa}\H_{r-1,l}\Lambda^{k+1}\xrightarrow{\kappa} \H_{r,l}\Lambda^k\xrightarrow{\kappa}
\cdots.
\end{equation}
When $l=0$ this is the Koszul complex, and exactness follows from the homotopy formula.
In fact, the complex \eqref{lcomplex} is exact for all $l\ge0$.

\begin{thm}\label{t1}
For $r\ge 1$, $0\le l < r$, $0\le k< n$, the sequence
$$
\H_{r-1,l}\Lambda^{k+1}\xrightarrow{\kappa} \H_{r,l}\Lambda^k\xrightarrow{\kappa} \H_{r+1,l}\Lambda^{k-1}
$$
is exact.  Equivalently,
$\dim \kappa(\H_{r-1,l}\Lambda^{k+1}) + \dim\kappa(\H_{r,l}\Lambda^k)=
\dim \H_{r,l}\Lambda^k$.
\end{thm}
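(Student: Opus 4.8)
The plan is to establish the nontrivial inclusion $\ker(\kappa\colon\H_{r,l}\Lambda^k\to\H_{r+1,l}\Lambda^{k-1})\subseteq\kappa(\H_{r-1,l}\Lambda^{k+1})$ by induction on the number of variables $n$; the reverse inclusion is automatic from $\kappa^2=0$ and Lemma~\ref{degs}, and the displayed dimension identity is then equivalent to exactness by rank--nullity. When $l=0$ the claim is the classical exactness of the Koszul complex, obtained from the homotopy formula~\eqref{hmt} by taking the preimage $(r+k)^{-1}d\omega$; so I may assume $l\ge1$, hence $r\ge2$. I will also use, at one endpoint of the induction, that $\kappa$ is injective on $\H_s\Lambda^n(\R^n)$ for every $s\ge0$, which likewise follows from~\eqref{hmt} since $d$ annihilates top-degree forms. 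The base case $n=1$ is vacuous, for there $k=0$ and $\H_{r,l}\Lambda^0(\R^1)=0$ when $1\le l<r$.

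For the inductive step I would peel off the last coordinate. Every $\omega\in\H_r\Lambda^k(\R^n)$ expands uniquely as $\omega=\sum_{j\ge0}x_n^j(\alpha_j+\beta_j\wedge dx_n)$ with $\alpha_j\in\H_{r-j}\Lambda^k(\R^{n-1})$ and $\beta_j\in\H_{r-j}\Lambda^{k-1}(\R^{n-1})$ forms in $x_1,\dots,x_{n-1}$ only. Two observations drive the argument. \emph{(i)} Comparing linear degrees in $\R^n$ and $\R^{n-1}$, one finds $\omega\in\H_{r,l}\Lambda^k(\R^n)$ iff $\alpha_j\in\H_{r-j,l}\Lambda^k(\R^{n-1})$ for $j\ne1$, $\alpha_1\in\H_{r-1,l-1}\Lambda^k(\R^{n-1})$, and $\beta_j\in\H_{r-j,l}\Lambda^{k-1}(\R^{n-1})$ for all $j$ — the one-unit relaxation in the $j=1$ slot reflecting the extra linear factor $x_n$ attached to $\alpha_1$. \emph{(ii)} Writing $\kappa'$ for the Koszul operator on $\R^{n-1}$, the anti-derivation rule for $\kappa$ together with $\kappa(dx_n)=x_n$ gives
$$\kappa\omega=\sum_{j\ge0}x_n^j\bigl(\kappa'\alpha_j+(-1)^{k-1}\beta_{j-1}\bigr)+\sum_{j\ge0}x_n^j\,(\kappa'\beta_j)\wedge dx_n,$$
so that $\kappa\omega=0$ is equivalent to $\kappa'\beta_j=0$ and $\kappa'\alpha_j=(-1)^k\beta_{j-1}$ for every $j$ (with $\beta_{-1}=0$).

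Given $\omega\in\H_{r,l}\Lambda^k(\R^n)$ with $\kappa\omega=0$, I seek $\eta=\sum_j x_n^j(\gamma_j+\delta_j\wedge dx_n)$ with $\kappa\eta=\omega$; by \emph{(i)}--\emph{(ii)} this means solving $\kappa'\delta_j=\beta_j$ and $\kappa'\gamma_j=\alpha_j-(-1)^k\delta_{j-1}$ for all $j$, with $\delta_j\in\H_{r-1-j,l}\Lambda^k(\R^{n-1})$, $\gamma_j\in\H_{r-1-j,l}\Lambda^{k+1}(\R^{n-1})$ for $j\ne1$, and $\gamma_1\in\H_{r-2,l-1}\Lambda^{k+1}(\R^{n-1})$. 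The crucial move: for $j\ge1$ the relation $\kappa'\alpha_{j+1}=(-1)^k\beta_j$ permits the \emph{explicit} choice $\delta_j:=(-1)^k\alpha_{j+1}$, which lies in $\H_{r-1-j,l}\Lambda^k(\R^{n-1})$ because $j+1\ne1$; and then $\alpha_j-(-1)^k\delta_{j-1}=0$ for all $j\ge2$, so $\gamma_j:=0$ serves there. Only the slots $j=0,1$ remain, and these I resolve via the inductive hypothesis on $\R^{n-1}$: from $\kappa'\beta_0=0$ it yields $\delta_0\in\H_{r-1,l}\Lambda^k(\R^{n-1})$ with $\kappa'\delta_0=\beta_0$, after which $\alpha_0$ and $\alpha_1-(-1)^k\delta_0$ are $\kappa'$-closed of linear degrees $\ge l$ and $\ge l-1$ respectively, so exactness on $\R^{n-1}$ furnishes $\gamma_0$ and $\gamma_1$. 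Patching the pieces gives the required $\eta\in\H_{r-1,l}\Lambda^{k+1}(\R^n)$.

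The step I expect to resist hardest is precisely the one the explicit choice of $\delta_j$ sidesteps. The assertion is genuinely \emph{false} when $l=r$ — for instance $\kappa$ is not injective on $\H_{1,1}\Lambda^1(\R^2)$ — so the inductive hypothesis is unavailable for the slots $\H_{r-j,l}\Lambda^k(\R^{n-1})$ with $j\ge1$ and $l=r-j$; the whole scheme is arranged so that those slots are only ever asked to split the zero form. A second, minor, point: when $k=n-1$ the induction would be invoked for top-degree forms on $\R^{n-1}$, outside the stated range, but then the relevant target spaces $\H_{r-1,l}\Lambda^n(\R^{n-1})$ and $\H_{r-2,l-1}\Lambda^n(\R^{n-1})$ vanish, so what is needed is that $\alpha_0$ and $\alpha_1-(-1)^k\delta_0$ vanish; this holds because $\H_{r,l}\Lambda^{n-1}(\R^{n-1})=0$ for $l\ge1$ by~\eqref{lmax}, while the remaining $l-1=0$ instance uses the injectivity of $\kappa$ on top-degree forms recorded at the start. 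I would also remark that the same splitting yields a recursion in $n$ for $\dim\H_{r,l}\Lambda^k$ and for $\dim\kappa(\H_{r,l}\Lambda^k)$, hence an alternative proof of the equivalent identity by dimension counting; the constructive route has the bonus of producing explicit Koszul preimages, which is what the unisolvence proof in Section~3 ultimately exploits.
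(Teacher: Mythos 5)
Your proof is correct, and it takes a genuinely different route from the paper's. The paper's argument uses the homotopy formula \eqref{hmt} to produce the immediate candidate preimage $\mu=d\omega/(r+k)\in\H_{r-1}\Lambda^{k+1}$, which satisfies $\kappa\mu=\omega$ but whose linear degree may be only $l-1$; the whole difficulty is then concentrated in Lemma~\ref{l1}, which corrects $\mu$ by an element of the image of $\kappa$ so as to raise its linear degree to $l$. That lemma is proved by induction on $l$ via a canonical form $\mu=\sum_\tau\eta_\tau\wedge dx_\tau$ in which each $\eta_\tau$ is ``full'' (all its alternator indices lie in the support of its coefficient), with a separate observation for top-degree forms. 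You instead induct on the dimension $n$, splitting $\omega=\sum_j x_n^j(\alpha_j+\beta_j\wedge dx_n)$ and solving the resulting system slot by slot; I have checked your linear-degree bookkeeping across the splitting (the one-unit relaxation in the $j=1$ slot), the signs in the anti-derivation computation, and the boundary cases $k=0$ and $k=n-1$, and they are all accurate. The decisive point in your argument is that the closedness relations $\kappa'\alpha_{j+1}=(-1)^k\beta_j$ hand you explicit preimages $\delta_j=(-1)^k\alpha_{j+1}$ in exactly the slots where the inductive hypothesis is unavailable (since the constraint $l<r-j$ can fail there, as your $\H_{1,1}\Lambda^1(\R^2)$ example rightly shows), leaving only the $j=0,1$ slots to the induction, with the $k=n-1$ degeneracy disposed of by \eqref{lmax} and the injectivity of $\kappa$ on top-degree forms. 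What the paper's route buys is a short derivation from the homotopy formula plus a reusable structural lemma about improving linear degree modulo the image of $\kappa$; what yours buys is a self-contained, more elementary recursion in $n$ that yields explicit Koszul preimages and, as you note, a dimension recursion as a byproduct.
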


The proof is due to Scot Adams and Victor Reiner \cite{adams-reiner}.  Its
main ingredient is contained in the following lemma.
\begin{lemma}\label{l1}
Let $r\ge 1$, $l\ge 1$, and $0\le k < n$.
Suppose that $\mu\in \H_{r}\Lambda^{k}$ is of linear degree at least $l-1$
and $\kappa\mu$ is of linear degree at least $l$.
Then there exists $\nu\in\H_{r-1}\Lambda^{k+1}$ such that $\mu-\kappa\nu$ is of
linear degree at least $l$.  Further, if $\mu\in \H_r\Lambda^n$ is nonzero, then $\kappa\mu$
is of linear degree $0$.
\end{lemma}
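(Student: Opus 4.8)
The plan is to prove the two assertions of Lemma~\ref{l1} separately, starting with the last (and easier) one. Suppose $\mu\in\H_r\Lambda^n$ is nonzero. Since $\Sigma(n)$ has the single element $\sigma=\N_n$, we have $\sigma^*=\emptyset$, so \emph{every} monomial $m=x^\alpha\,dx_{\N_n}$ in $\H_r\Lambda^n$ has $\ldeg m=\#\{i\in\emptyset\mid\alpha_i=1\}=0$. Hence $\mu$ itself has linear degree $0$. Now by \eqref{kformula}, $\kappa\mu=\sum_{q\in\N_n}\epsilon(q,\N_n-q)x_q\mu_{\N_n}\,dx_{\N_n-q}$, and since $\mu_{\N_n}$ is a nonzero polynomial of degree $r$, pick a monomial $x^\alpha$ appearing in it; then $x_q x^\alpha\,dx_{\N_n-q}$ is a monomial of $\kappa\mu$ with complement $\{q\}$, and its exponent of $x_q$ is $\alpha_q+1\ge1$. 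Choosing $q$ so that $\alpha_q\ge1$ if possible, we get a monomial of linear degree $0$; more carefully, one must check that at least one such monomial genuinely survives (no cancellation in a fixed $dx_{\N_n-q}$ component since the coefficient is simply $\pm x_q\mu_{\N_n}$). This gives $\ldeg\kappa\mu=0$.

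For the main assertion, the approach is to reduce to a purely linear-algebra statement about monomials and then exhibit $\nu$ explicitly. First I would reduce to the homogeneous-linear-degree case: decompose $\mu=\mu_{l-1}+\mu'$ where $\mu_{l-1}$ collects the monomials of linear degree exactly $l-1$ and $\mu'$ has linear degree $\ge l$; only $\mu_{l-1}$ needs to be corrected, so we may assume $\mu=\mu_{l-1}$ is of homogeneous linear degree $l-1$. The hypothesis is that $\kappa\mu$ has linear degree $\ge l$, i.e., the "linear-degree-$(l-1)$ part" of $\kappa\mu$ vanishes. The natural candidate for $\nu$ is built from the homotopy formula: since $(d\kappa+\kappa d)\mu=(r+k)\mu$, write $\mu=\frac1{r+k}(d\kappa\mu+\kappa d\mu)$, and try $\nu=\frac1{r+k}d\mu\in\H_{r-1}\Lambda^{k+1}$, so that $\mu-\kappa\nu=\frac1{r+k}d\kappa\mu$. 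Then the claim becomes: $\ldeg d\kappa\mu\ge l$. Since $\kappa\mu$ has linear degree $\ge l$ by hypothesis, and a priori $d$ can lower linear degree by one, this is not automatic — but the point is that $d$ applied to a form of \emph{homogeneous} linear degree $\ge l$ lowers linear degree only in a controlled way, and here the extra structure (that $\kappa\mu=\kappa\mu_{l-1}$ arises from something of homogeneous linear degree $l-1$, via the linear-degree-raising analysis already recorded in the proof of Lemma~\ref{degs}) should pin down which monomials of $d\kappa\mu$ could have linear degree $l-1$ and show that their coefficients vanish precisely because the linear-degree-$(l-1)$ part of $\kappa\mu$ is zero.

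Concretely, the key step is a bookkeeping computation: using \eqref{kdformula} for $\kappa d$ and the dual formula for $d\kappa$, express the linear-degree-$(l-1)$ component of $d\kappa\mu$ in terms of $\mu$, and verify it is a constant multiple of the linear-degree-$(l-1)$ component of $\kappa\mu$ (which is $0$), perhaps after also adjusting $\nu$ by a $\kappa$-closed correction term to kill any residual piece. I would organize this by working monomial-by-monomial: for a single monomial $m=x^\alpha\,dx_\sigma$ with $\ldeg m=l-1$, compute $\kappa m$, identify its linear-degree-$(l-1)$ monomials (those are the ones where $\kappa$ did \emph{not} raise the linear degree, i.e., the $i$ with $\alpha_{\sigma_i}\ne1$ after removal — actually where $\sigma_i$ was not a "new linear" index), and show $d$ of the rest stays at linear degree $\ge l$ while the problematic terms cancel against each other across the monomials of $\mu$ exactly under the stated hypothesis.

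The main obstacle I expect is the last point: controlling how $d$ interacts with linear degree. Unlike $\kappa$, the exterior derivative $\partial_q$ can turn a quadratic occurrence $x_q^2$ into a linear one $x_q$, dropping the linear degree; so one cannot argue termwise that $\ldeg d\kappa\mu\ge\ldeg\kappa\mu$. The resolution has to use that we are not looking at an arbitrary form of linear degree $\ge l$ but at $\kappa\mu$ with $\mu$ homogeneous of linear degree $l-1$, together with the vanishing hypothesis — i.e., the argument is genuinely about the interplay of $\kappa$, $d$, and linear degree on the specific two-step composition, and will likely require the explicit index formulas of Lemma~\ref{kdkd} and the identity \eqref{epseps} to see the cancellation. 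Getting the signs and the index ranges ($q\in\sigma^*$, $p\in\sigma$, the shifts $\sigma+q-p$) exactly right is the delicate part; once the cancellation identity is in hand, setting $\nu=(r+k)^{-1}d\mu$ (possibly plus a correction) and invoking the homotopy formula \eqref{hmt} finishes the proof.
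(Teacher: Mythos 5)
Your handling of the final statement (the $n$-form case) is correct and essentially identical to the paper's. The main assertion, however, rests on a step that is false. You take $\nu=(r+k)^{-1}d\mu$, so that $\mu-\kappa\nu=(r+k)^{-1}d\kappa\mu$ by the homotopy formula, and reduce to showing $\ldeg d\kappa\mu\ge l$, hoping this follows from the vanishing of the linear-degree-$(l-1)$ part of $\kappa\mu$. It does not. Take $n=2$, $k=1$, $r=2$, $l=1$, $\mu=x_1^2\,dx_2$. Then $\ldeg\mu=0=l-1$ and $\kappa\mu=x_1^2x_2$ has linear degree $1=l$, so all hypotheses hold; but $d\kappa\mu=2x_1x_2\,dx_1+x_1^2\,dx_2$ contains the monomial $x_1^2\,dx_2$ of linear degree $0$. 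Thus your candidate $\nu=\frac13 d\mu=\frac23x_1\,dx_1\wedge dx_2$ gives $\mu-\kappa\nu=\frac13(2x_1x_2\,dx_1+x_1^2\,dx_2)$, still of linear degree $0$. The repair you float---adjusting $\nu$ by a $\kappa$-closed correction---is vacuous: adding anything in $\ker\kappa$ to $\nu$ leaves $\kappa\nu$, and hence $\mu-\kappa\nu$, unchanged. A genuinely different $\nu$ is needed (here $\nu=x_1\,dx_1\wedge dx_2$ works, giving $\mu-\kappa\nu=x_1x_2\,dx_1$, and it differs from $\frac13d\mu$ by $\frac13x_1\,dx_1\wedge dx_2$, which is not $\kappa$-closed).

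The homotopy formula does enter the paper's argument, but one level up, in deducing Theorem~\ref{t1} from Lemma~\ref{l1}; the lemma itself is proved by a structural decomposition you are missing. Each monomial $x^\alpha\,dx_\sigma$ is written as $\pm\eta\wedge dx_\tau$ with $\tau=\sigma\setminus\supp(\alpha)$ and $\eta$ \emph{full} (every index of its alternator appears in its coefficient) and independent of the $\tau$ variables; collecting terms gives the canonical form $\mu=\sum_\tau\eta_\tau\wedge dx_\tau$. In $\kappa\mu=\sum\kappa\eta_\tau\wedge dx_\tau+\sum\pm\eta_\tau\wedge\kappa dx_\tau$ the first sum has homogeneous linear degree $l-1$ and the second $l$, so the hypothesis forces $\kappa\eta_\tau=0$ for every $\tau$. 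An induction on $l$ (exactness of the Koszul complex at linear degree $l-1$) then yields $\eta_\tau=\kappa\nu_\tau$ with $\nu_\tau$ of linear degree at least $l-1$ and independent of the $\tau$ variables, and $\nu=\sum\nu_\tau\wedge dx_\tau$ works because the remainder $-\sum\pm\nu_\tau\wedge\kappa dx_\tau$ gains one new linear variable $x_j$, $j\in\tau$, in each term. That canonical form and the induction on $l$, not a coefficient cancellation inside $d\kappa\mu$, are the missing ideas; without them your outline cannot be completed.
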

\begin{proof}
For the final statement, concerning $n$-forms, we write $\mu = p\,dx_1\wedge\cdots\wedge dx_n$
where $p\in \H_r$.  Since $r\ge 1$, $p$ is not constant.  But then
$\kappa\mu = \sum_i(-1)^{i+1}p x_i\, dx_1\wedge\cdots\wedge\widehat{dx_i}\wedge\cdots\wedge dx_n$
is easily seen to be of linear degree $0$.

For $0\le k< n$, the proof hinges on
a canonical form for an element of $\mu\in\H_r\Lambda^k$ which we establish before proceeding.
Let us say that a form monomial $x^\alpha\,dx_\sigma$ is \emph{full} if $\sigma\subset\supp(\alpha)$, the support of $\alpha$.
To each of the monomials $m=x^\alpha\,dx_\sigma$
of $\mu\in \H_r\Lambda^k$,
we associate the increasing sequences  $\rho$ and $\tau$ with $\rho=\sigma\cap \supp(\alpha)$ and
$\tau=\sigma\setminus\supp(\alpha)$.  Then
$m=\pm\eta\wedge dx_{\tau}$
where $\eta= x^\alpha dx_{\rho}$ is a full form monomial which
is independent of the $\tau$ variables (that is,
$\supp(\alpha)\cup\rho$ is disjoint from $\tau$).  Note that
$\ldeg\eta=\ldeg m$.
Finally, in the expansion of $\mu$ as a linear
combination of its monomials, we gather together the terms with the
same $\tau=\sigma\setminus\supp(\alpha)$, and in this way write
\begin{equation}\label{cf}
 \mu = \sum_{\tau\subset\N_n} \eta_\tau \wedge dx_{\tau},
\end{equation}
where $\eta_\tau\in \H_r\Lambda^{k-\#\tau}$ is independent of the $\tau$ variables, and has all
of its monomials full.  The expression on the right-hand side of \eqref{cf} is
the desired canonical form of $\mu$.

Now we proceed with the proof of the lemma.  We consider first the special case in
which $\mu$ is of homogeneous linear degree $l-1$, and, in this special case, we
use induction on $l$, the case $l=0$ being known (exactness of the Koszul complex).
Expressing $\mu$ in the canonical
form \eqref{cf}, we have $\ldeg\eta_\tau=l-1$ for each $\tau$ (for which the coefficient $\eta_\tau$
does not vanish).  Now
\begin{equation}\label{kcf}
\kappa \mu = \sum \kappa\eta_\tau \wedge dx_{\tau} + \sum \pm \eta_\tau \wedge \kappa dx_{\tau}.
\end{equation}
The first sum is of homogeneous linear degree $l-1$
and the second of homogeneous linear degree $l$.  Since we assumed that $\kappa\mu$ is of linear
degree at least $l$, the first sum must vanish.
But this sum is in canonical form, so we conclude that $\kappa\eta_\tau=0$
for each $\tau$.  Invoking the inductive hypothesis, we can write
$\eta_\tau=\kappa \nu_\tau$ where $\nu_\tau$ has linear degree at least $l-1$.
Let
$$
\nu = \sum  \nu_\tau \wedge dx_{\tau}\in \H_{r-1}\Lambda^{k+1}.
$$
Then
$$
\mu-\kappa\nu = \sum \eta_\tau \wedge dx_{\tau} -  \sum \kappa\nu_\tau\wedge dx_{\tau}
  - \sum \pm\nu_\tau\wedge \kappa dx_{\tau}
= - \sum \pm\nu_\tau\wedge \kappa dx_{\tau}.
$$
Expanding the right-hand side into monomials, we see that each has $\ldeg$ at least $l$, so this
completes the proof under the assumption that $\mu$ is of homogeneous linear degree $l-1$.

Next we turn to the general case, in which $\mu$ is of linear degree at least $l-1$.
We may split $\mu$ as $\mu'+\mu''$ with $\mu'$ of homogeneous linear degree $l-1$
and $\mu''$ of linear degree at least $l$.
Then $\kappa\mu'$ splits into a part of homogeneous linear degree $l-1$ and a part
of homogeneous linear degree $l$, while $\ldeg(\kappa\mu'')\ge l$. 
Since, by assumption, $\ldeg(\kappa\mu)\ge l$, 
the part of $\kappa\mu'$ with linear degree equal to $l-1$ must vanish.  That is, $\ldeg(\kappa\mu')\ge l$.
Therefore, we may apply the result of the preceding special case
to $\mu'$ to obtain $\nu\in\H_{r-1}\Lambda^{k+1}$ such that $\ldeg(\mu'-\kappa\nu)\ge l$.
Then $\mu-\kappa\nu=(\mu'-\kappa\nu) + \mu''$ is of linear degree at least $l$. 
\end{proof}

Finally, we give the proof of Theorem~\ref{t1}.
\begin{proof}
The result is certainly true for $l=0$, so we may assume $1\le l < r$ (and so $r\ge 2$).
Suppose $\omega\in \H_{r,l}\Lambda^k$ with $\kappa\omega=0$.  We must show that there
exists $\eta\in \H_{r-1}\Lambda^{k+1}$ with linear degree at least $l$, such that $\kappa\eta=\omega$.
Now $\mu:=d\omega/(r+k)\in \H_{r-1}\Lambda^{k+1}$ is of linear degree at least $l-1$
and satisfies
$\kappa\mu = (\kappa d + d\kappa)\omega/(r+k)=\omega$ by \eqref{hmt}.  If $k=n-1$, the final sentence
of the lemma insures that $\mu=0$.  For $0\le k <n-1$,
we apply the lemma with $r$ and $k$ replaced by $r-1$ and $k+1$, respectively, and conclude that
there exists $\nu\in\H_{r-2}\Lambda^{k+2}$ such that $\eta:=\mu-\kappa\nu$ is of linear
degree at least $l$.
Clearly, $\kappa\eta=\kappa\mu = \omega$.
\end{proof}

\section{The $\S_r\Lambda^k$ spaces}
Here, the main section of the paper, we define the polynomial spaces
$\S_r\Lambda^k(\R^n)$ we shall use as shape functions (see \eqref{defS} below) and the degrees of
freedom for these (see \eqref{dofs}).  We derive a number of properties of these
polynomial spaces in Theorems~\ref{degprop} through \ref{trace}
and use them to verify unisolvence in Theorem~\ref{unisolv}.

The space of shape functions will consists of polynomials of
a given degree plus certain additional terms of higher degree which
will be defined in terms of the following auxilliary space:
$$
\J_r\Lambda^k(\R^n) =  \sum_{l\ge 1}\kappa\, \H_{r+l-1,l}\Lambda^{k+1}(\R^n).
$$
In view of \eqref{lmax}, the sum is finite and
\begin{equation}\label{jdeg}
\J_r\Lambda^k(\R^n)\subset \P_{r+n-k-1}\Lambda^k(\R^n).
\end{equation}
Moreover, the sum is direct,
since the polynomial degrees of the summands differ.  The following proposition,
which follows directly from the definitions,
helps to clarify the meaning of this space.
\begin{prop}\label{jspace}
{\rm 1.} The space $\sum_{l\ge 1}\H_{r+l-1,l}\Lambda^{k}(\R^n)$ is the span of
all $k$-form monomials $m$ with $\deg m \ge r$ and
$\deg m-\ldeg m \le r-1$.

{\rm 2.} The space $\J_r\Lambda^k(\R^n)$ is
the span of $\kappa m$ for all $(k+1)$-form monomials $m$ with $\deg m \ge r$ and
$\deg m-\ldeg m \le r-1$.
\end{prop}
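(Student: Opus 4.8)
The plan is to reduce everything to unwinding the definitions of $\H_{r,l}\Lambda^k$, of linear degree, and of $\J_r\Lambda^k$, since the content of the proposition is pure bookkeeping about the exponents of monomials.

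First I would prove part~1. By definition, $\H_{s,l}\Lambda^k$ is spanned by those form monomials $m=x^\alpha\,dx_\sigma$ with $\deg m=s$ and $\ldeg m\ge l$: indeed $\ldeg\mu$ is the minimum of $\ldeg$ over the monomials appearing in $\mu$, so a polynomial form has linear degree at least $l$ exactly when every one of its monomials does, and a linear combination of monomials of linear degree $\ge l$ is again such a combination (cancellation only deletes monomials). Hence $\sum_{l\ge1}\H_{r+l-1,l}\Lambda^k$ is spanned by the union, over $l\ge1$, of the monomial sets $\{\,m:\deg m=r+l-1,\ \ldeg m\ge l\,\}$. I would then check that a form monomial $m$ lies in this union precisely when $\deg m\ge r$ and $\deg m-\ldeg m\le r-1$: the only value of $l$ for which $\deg m=r+l-1$ is $l=\deg m-r+1$, and the two conditions $l\ge1$ and $\ldeg m\ge l$ are then exactly $\deg m\ge r$ and $\deg m-\ldeg m\le r-1$. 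Since the sum of the spans equals the span of the union, part~1 follows.

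For part~2 I would apply part~1 with $k$ replaced by $k+1$, so that $\sum_{l\ge1}\H_{r+l-1,l}\Lambda^{k+1}$ is the span of the $(k+1)$-form monomials $m$ with $\deg m\ge r$ and $\deg m-\ldeg m\le r-1$. Because $\kappa$ is linear, $\J_r\Lambda^k=\sum_{l\ge1}\kappa\,\H_{r+l-1,l}\Lambda^{k+1}=\kappa\bigl(\sum_{l\ge1}\H_{r+l-1,l}\Lambda^{k+1}\bigr)$, and the image under a linear map of the span of a set is the span of the images; hence $\J_r\Lambda^k$ is the span of $\kappa m$ over exactly the monomials described, which is the assertion.

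There is essentially no obstacle: the only step requiring a moment's care is the elementary observation that $\H_{s,l}\Lambda^k$ really is the monomial span just described, equivalently that ``linear degree at least $l$'' for a polynomial form is equivalent to ``every monomial has linear degree at least $l$,'' which is immediate from the definition of $\ldeg$ as a minimum over monomials. Everything else is the substitution $l=\deg m-r+1$ together with the fact that applying a linear map to a spanning set yields a spanning set of the image.
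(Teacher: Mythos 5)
Your proof is correct and matches the paper's intent exactly: the paper gives no explicit proof, stating only that the proposition ``follows directly from the definitions,'' and your careful unwinding of the definitions of $\H_{s,l}\Lambda^k$, linear degree as a minimum over monomials, and the substitution $l=\deg m-r+1$ is precisely the routine verification the authors had in mind.
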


For several values of
$k$,
this space can be described more explicitly.  By \eqref{lmax},
\begin{equation}\label{jvan}
\J_r\Lambda^k(\R^n) = 0, \text{ for $k=n$ or $n-1$},
\end{equation}
while
$$
\J_r\Lambda^{n-2}(\R^n) =  \kappa\, \H_{r,1}\Lambda^{n-1}(\R^n).
$$
Now, $\H_{r,1}\Lambda^{n-1}(\R^n)$ is the span of the monomials
$x_i w_i \theta_i$,
where $w_i\in \H_{r-1}(\R^n)$ is independent of $x_i$
and $\theta_i := (-1)^{i-1}  d x_1 \wedge \ldots 
\wedge\widehat{dx^i}\wedge \ldots \wedge d x_n$.  We then have
$$
\kappa \theta_i = \sum_{j<i}x_j\theta_{j,i} - \sum_{j>i}x_j \theta_{i,j},
$$
with
$$
\theta_{i,j}  = (-1)^{i+j} d x_1 \wedge \ldots 
\wedge\widehat{dx^i}\wedge \ldots\wedge\widehat{dx^j}\wedge \ldots \wedge d x_n.
$$
Therefore,
\begin{equation}\label{jn-2}
\J_r\Lambda^{n-2}(\R^n) =\{\,\sum_{i<j}x_ix_j(w_i-w_j)\theta_{i,j}\,|\, w_i\in \H_{r-1}(\R^n)
\text{ independent of $x_i$}\,\}.
\end{equation}
Finally, we identify $\J_r\Lambda^0(\R^n)$.  By  Theorem~\ref{t1} in the case $k=0$,
we see that $\J_r\Lambda^0(\R^n)=\sum_{l\ge 1}\H_{r+l,l}\Lambda^0(\R^n)$.
By Proposition~\ref{jspace},
this space is the span of monomials of degree $>r$ whose superlinear degree, that is, $\deg -\ldeg$,
is at most $r$.

We can now define the space of polynomial $k$-forms which we use for shape functions,
\begin{equation}\label{defS}
\S_r\Lambda^k(\R^n):= \P_r\Lambda^k(\R^n) + \J_r\Lambda^k(\R^n) +d\J_{r+1}\Lambda^{k-1}(\R^n),
\end{equation}
defined for all $r\ge 1$ and all $0\le k \le n$.  From \eqref{jdeg},
\begin{equation}\label{dbd}
 \S_r\Lambda^k(\R^n)\subset\P_{r+n-k}\Lambda^k(\R^n).
\end{equation}

Note that, in case $k=0$, the final term in \eqref{defS} vanishes, and, by the characterization of
$\J_r\Lambda^0(\R^n)$ just derived, $\S_r\Lambda^0(\R^n)$ consists precisely of the span
of all monomials of superlinear degree at most $r$.
This is exactly the serendipity space as defined in \cite{serendipity}.  In this case, \eqref{jdeg}
gives the sharper degree bound
\begin{equation}\label{sharper}
 \S_r\Lambda^0(\R^n)\subset\P_{r+n-1}\Lambda^0(\R^n).
\end{equation}

Another case in which the expression for $\S_r\Lambda^k(\R^n)$
can be simplified is when $k=n$.  By \eqref{jvan},
$$
\S_r\Lambda^n(\R^n) = \P_r\Lambda^n(\R^n).
$$
When $k=n-1$, we have, by \eqref{jvan},
$$
\S_r\Lambda^{n-1}(\R^n):= \P_r\Lambda^{n-1}(\R^n)  +d\J_{r+1}\Lambda^{n-2}(\R^n),
$$
where the last space is characterized in \eqref{jn-2}.  In the case of three dimensions,
this is formula \eqref{newbdm} given in the introduction, stated in the language
of exterior calculus.  In a similar way, we recover formula \eqref{newcurl} for
the $3$-D $H(\curl)$ elements discussed in the introduction.

We now derive several properties of these polynomial spaces.  The first limits the monomials
that appear in the polynomials in $\S_r\Lambda^k(\R^n)$.
\begin{thm}[Degree property]\label{degprop}
For any $n,r\ge 1$ and  $0\le k\le n$, the space $\S_r\Lambda^k(\R^n)$ is contained
in the span of the $k$-form monomials $m$ of degree at most $r+n-k-\delta_{k0}$
for which
\begin{equation}\label{deg}
 \deg m-\ldeg m \le r+1-\delta_{k0}.
\end{equation}
\end{thm}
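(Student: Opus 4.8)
The plan is to verify the two asserted bounds — the polynomial‑degree bound $\deg m\le r+n-k-\delta_{k0}$ and the superlinear‑degree bound \eqref{deg} — separately for each of the three summands $\P_r\Lambda^k(\R^n)$, $\J_r\Lambda^k(\R^n)$, $d\J_{r+1}\Lambda^{k-1}(\R^n)$ of $\S_r\Lambda^k(\R^n)$ in \eqref{defS}. If each summand lies in the span of the $k$‑form monomials obeying both bounds, so does their sum. (When $k=0$ the third summand is absent, which is precisely what allows the sharper constants encoded by $\delta_{k0}$.) The polynomial‑degree bounds will come almost for free from the containment \eqref{jdeg}, together with the fact that $d$ lowers polynomial degree by one; the real content of the proof is the superlinear‑degree estimate.

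The two facts I will use repeatedly are elementary computations on a single form monomial $m=x^\alpha\,dx_\sigma$, entirely parallel to the proof of Lemma~\ref{degs}. First, directly from \eqref{defk}, every monomial of $\kappa m$ has polynomial degree $\deg m+1$ and linear degree $\ldeg m$ or $\ldeg m+1$; hence $\kappa$ raises the superlinear degree $\deg-\ldeg$ of a monomial by at most one. Second, every monomial of $dm$ has polynomial degree $\deg m-1$ and linear degree at least $\ldeg m-1$ (differentiating with respect to a variable that occurs linearly lowers the linear degree by exactly one, and differentiating with respect to any other variable does not lower it at all); hence $d$ does not raise $\deg-\ldeg$. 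Both statements extend from monomials to arbitrary polynomial forms, since every monomial occurring in $\kappa\mu$ (resp.\ $d\mu$) appears among the monomials of $\kappa m$ (resp.\ $dm$) for some monomial $m$ of $\mu$.

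Now I apply these. For $\P_r\Lambda^k(\R^n)$ both bounds are immediate: a monomial of degree at most $r$ satisfies $\deg-\ldeg\le r\le r+1-\delta_{k0}$ and $\deg\le r\le r+n-k-\delta_{k0}$ (using $n\ge1$ in the case $k=0$). For $\J_r\Lambda^k(\R^n)$, Proposition~\ref{jspace}(2) exhibits it as the span of $\kappa m$ over $(k+1)$‑form monomials $m$ with $\deg m-\ldeg m\le r-1$; by the $\kappa$ fact every monomial that occurs then has $\deg-\ldeg\le r\le r+1-\delta_{k0}$, while \eqref{jdeg} gives $\deg\le r+n-k-1\le r+n-k-\delta_{k0}$ (with equality when $k=0$). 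For $d\J_{r+1}\Lambda^{k-1}(\R^n)$, which is relevant only when $k\ge1$ so that $\delta_{k0}=0$, Proposition~\ref{jspace}(2) with $r$ replaced by $r+1$ exhibits $\J_{r+1}\Lambda^{k-1}(\R^n)$ as the span of $\kappa m$ over $k$‑form monomials $m$ with $\deg m-\ldeg m\le r$, so every monomial occurring there has $\deg-\ldeg\le r+1$; applying $d$ preserves this by the second fact, so the monomials of $d\J_{r+1}\Lambda^{k-1}(\R^n)$ satisfy $\deg-\ldeg\le r+1=r+1-\delta_{k0}$, and \eqref{jdeg} together with the degree‑lowering of $d$ gives $\deg\le r+n-k=r+n-k-\delta_{k0}$.

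There is no deep obstacle here; the parts that need care are purely bookkeeping. The main one is the role of $\delta_{k0}$: forms in $\J_r\Lambda^k$ already have superlinear degree at most $r$ — one less than the generic bound $r+1$ — because of the shift in the index $\H_{r+l-1,l}$ in the definition of $\J$; this unit of slack is consumed by the extra $\kappa$ and $d$ present in the third summand when $k\ge1$, but is not needed when $k=0$ since that summand is then absent (and similarly the polynomial‑degree bound $r+n-1$ for $k=0$ comes directly from $\J_r\Lambda^0\subset\P_{r+n-1}\Lambda^0$, cf.\ \eqref{sharper}). The other point to keep straight is that $\kappa$ may increase the superlinear degree while $d$ cannot — interchanging these would break the estimate — and that for the $d\J_{r+1}\Lambda^{k-1}$ term the polynomial‑degree bound genuinely requires the containment \eqref{jdeg}; trying to recover it from the superlinear‑degree bound together with \eqref{lmax} is off by one.
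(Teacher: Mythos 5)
Your proof is correct and follows essentially the same route as the paper's: the polynomial-degree bound is read off from \eqref{jdeg} (equivalently \eqref{dbd} and \eqref{sharper}), and the superlinear-degree bound \eqref{deg} is verified summand by summand in \eqref{defS} using Proposition~\ref{jspace} together with the facts that $\kappa$ raises $\deg-\ldeg$ of a monomial by at most one and $d$ does not raise it. The only cosmetic difference is that you state these two monomial-level facts as explicit lemmas up front, whereas the paper invokes Lemma~\ref{degs} for the $\kappa$ step and does the $d$ step inline.
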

\begin{proof}
The bound $r+n-k-\delta_{k0}$ on the degree is given in \eqref{dbd} for $k>0$ and
in \eqref{sharper} for $k=0$, so we need only  show 
\eqref{deg}. If $m$ is a monomial of an element of $\P_r\Lambda^k(\R^n)$, then $\deg m\le r$ and
$\ldeg m\ge 0$, so \eqref{deg} holds with $r$ on the right-hand side.
If $m$ is a monomial of an element of $\J_r\Lambda^k(\R^n)$,
then $m$ occurs in the expansion of $\kappa p$, where $p$ is a $(k+1)$-form monomial
with $\deg p-\ldeg p\le r-1$ (Proposition~\ref{jspace}).  Then
$\deg m = \deg p+1$ and, by Lemma~\ref{degs}, $\ldeg m \ge \ldeg p$, so again $\deg m-\ldeg m\le r$.  Finally,
if $k>0$ and
$m$ is a monomial of an element of $d\J_{r+1}\Lambda^{k-1}(\R^n)$, then by the argument
just given, $m$ is a monomial of $dq$ where $q$ is a $(k-1)$-form monomial with $\deg q-\ldeg q\le r+1$.
Since $\deg m=\deg q-1$ and $\ldeg m \ge \ldeg q-1$, we get \eqref{deg}.
\end{proof}

A crucial property of these polynomial form spaces, is that they can be combined
to form a subcomplex of the de~Rham complex.
\begin{thm}[Subcomplex property]\label{complex}
 Let $n,r\ge 1$, and let $0< k\le n$.  Then
$$
d\S_{r+1}\Lambda^{k-1}(\R^n) \subset \S_r\Lambda^k(\R^n).
$$
\end{thm}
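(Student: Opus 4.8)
The plan is to read the inclusion off directly from the definition \eqref{defS}: the space $\S_r\Lambda^k(\R^n)$ was built, through the term $d\J_{r+1}\Lambda^{k-1}(\R^n)$, precisely so that this subcomplex property holds, so no genuine argument is required beyond expanding definitions and invoking $d^2=0$. Concretely, writing out the source space by \eqref{defS},
$$
\S_{r+1}\Lambda^{k-1}(\R^n) = \P_{r+1}\Lambda^{k-1}(\R^n) + \J_{r+1}\Lambda^{k-1}(\R^n) + d\J_{r+2}\Lambda^{k-2}(\R^n),
$$
it suffices to check that $d$ carries each of the three summands into $\S_r\Lambda^k(\R^n)$.

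First I would dispatch the polynomial part: since $d$ maps $\H_s\Lambda^{k-1}$ into $\H_{s-1}\Lambda^k$, it maps $\P_{r+1}\Lambda^{k-1}(\R^n)$ into $\P_r\Lambda^k(\R^n)$, which is the first summand of $\S_r\Lambda^k(\R^n)$ in \eqref{defS}. Next, $d\J_{r+1}\Lambda^{k-1}(\R^n)$ is, verbatim, the third summand of $\S_r\Lambda^k(\R^n)$ in \eqref{defS}, so there is nothing to prove. Finally, $d\bigl(d\J_{r+2}\Lambda^{k-2}(\R^n)\bigr)=0$ because $d^2=0$ (with the convention $\Lambda^j=0$ for $j<0$, so that this term is simply absent when $k=1$). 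Combining the three cases yields $d\S_{r+1}\Lambda^{k-1}(\R^n)\subset\S_r\Lambda^k(\R^n)$.

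I do not expect any real obstacle here: the whole point of including $d\J_{r+1}\Lambda^{k-1}$ in the definition \eqref{defS} is to absorb $d$ applied to the $\J$-part of $\S_{r+1}\Lambda^{k-1}$, while the polynomial part and the exact part are handled by the elementary facts that $d$ drops polynomial degree by one and that $d$ is a complex. The substantive work — that this inclusion is compatible with the degrees of freedom, and that the resulting finite element complex has the correct cohomology — is carried out in the unisolvence and commuting-projection theorems later in the section and is not needed for the present statement.
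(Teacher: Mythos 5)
Your proof is correct and follows exactly the same route as the paper: split $\S_{r+1}\Lambda^{k-1}$ into its three summands from \eqref{defS}, note that $d$ sends $\P_{r+1}\Lambda^{k-1}$ into $\P_r\Lambda^k$, that $d\J_{r+1}\Lambda^{k-1}$ is by construction a summand of $\S_r\Lambda^k$, and that the third summand is killed by $d^2=0$. No differences worth noting.
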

\begin{proof}
With reference to \eqref{defS}, we note that
$d\P_{r+1}\Lambda^{k-1}(\R^n)\subset \P_r\Lambda^k(\R^n)$, and
$d(d\J_{r+2}\Lambda^{k}(\R^n))$ vanishes, so it suffices to prove that
$d\J_{r+1}\Lambda^{k-1}(\R^n)\subset \S_r\Lambda^k(\R^n)$,
which is immediate from \eqref{defS}.
\end{proof}

We next observe that the spaces increase with increasing polynomial degree.
\begin{thm}[Inclusion property]\label{inclusion}
 Let $n,r\ge 1$, and let $0\le k\le n$.  Then
$$
\S_{r}\Lambda^{k}(\R^n) \subset \S_{r+1}\Lambda^k(\R^n).
$$
\end{thm}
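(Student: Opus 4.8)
The plan is to verify the inclusion term-by-term against the definition \eqref{defS} of $\S_r\Lambda^k(\R^n)$ as $\P_r\Lambda^k(\R^n) + \J_r\Lambda^k(\R^n) + d\J_{r+1}\Lambda^{k-1}(\R^n)$. The first term is trivial, since $\P_r\Lambda^k(\R^n)\subset\P_{r+1}\Lambda^k(\R^n)$. So the work is to show $\J_r\Lambda^k(\R^n)\subset\S_{r+1}\Lambda^k(\R^n)$ and $d\J_{r+1}\Lambda^{k-1}(\R^n)\subset\S_{r+1}\Lambda^k(\R^n)$.

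For the second of these, note that $d\J_{r+1}\Lambda^{k-1}(\R^n)$ is literally the third summand in the definition of $\S_{r+1}\Lambda^k(\R^n)=\P_{r+1}\Lambda^k(\R^n)+\J_{r+1}\Lambda^k(\R^n)+d\J_{r+2}\Lambda^{k-1}(\R^n)$ — wait, the index does not match: $\S_{r+1}\Lambda^k$ contains $d\J_{r+2}\Lambda^{k-1}$, not $d\J_{r+1}\Lambda^{k-1}$. So this requires the analogue of the inclusion property one form-degree down, namely $\J_{r+1}\Lambda^{k-1}(\R^n)\subset\S_{r+2}\Lambda^{k-1}(\R^n)$, which again reduces to the inclusion $\J_r\Lambda^k(\R^n)\subset\S_{r+1}\Lambda^k(\R^n)$ (at shifted indices). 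Thus the heart of the matter is the single claim
$$
\J_r\Lambda^k(\R^n)\subset\S_{r+1}\Lambda^k(\R^n)\qquad\text{for all }r\ge1,\ 0\le k\le n.
$$
Granting this, the subcomplex property (Theorem~\ref{complex}) gives $d\J_{r+1}\Lambda^{k-1}(\R^n)\subset d\S_{r+2}\Lambda^{k-1}(\R^n)\subset\S_{r+1}\Lambda^k(\R^n)$, and combined with $\P_r\Lambda^k\subset\P_{r+1}\Lambda^k$ and the displayed claim itself, all three summands of $\S_r\Lambda^k(\R^n)$ land in $\S_{r+1}\Lambda^k(\R^n)$.

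To prove the displayed claim, I would use Proposition~\ref{jspace}: $\J_r\Lambda^k(\R^n)$ is spanned by $\kappa m$ for $(k{+}1)$-form monomials $m$ with $\deg m\ge r$ and $\deg m-\ldeg m\le r-1$. I split on $\deg m$. If $\deg m\ge r+1$, then the conditions $\deg m\ge r+1$ and $\deg m-\ldeg m\le r-1\le r$ show (again via Proposition~\ref{jspace}, now at level $r+1$) that $m$ contributes to $\sum_{l\ge1}\H_{r+l,l}\Lambda^{k+1}(\R^n)$, hence $\kappa m\in\J_{r+1}\Lambda^k(\R^n)\subset\S_{r+1}\Lambda^k(\R^n)$. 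The remaining case is $\deg m=r$ with $\ldeg m\ge 1$. Here $\kappa m$ has degree $r+1$, so it is not obviously in any summand of $\S_{r+1}\Lambda^k(\R^n)$; this is the one delicate point. The idea is to use the homotopy formula \eqref{hmt}: for $m\in\H_r\Lambda^{k+1}$ we have $(r+k+1)m = d\kappa m + \kappa d m$, so $\kappa m$ relates to $m$ modulo exact and Koszul terms. More directly, I would write $\kappa m = \kappa\mu$ where I decompose $\mu=m$ (homogeneous of linear degree $\ge 1$) and observe that $\kappa\mu\in\kappa\H_{r,1}\Lambda^{k+1}(\R^n)\subset\J_r\Lambda^k(\R^n)$ — but that is circular. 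Instead, the cleanest route is: $m$ itself has $\deg m = r$, so $m\in\P_r\Lambda^{k+1}(\R^n)\subset\S_r\Lambda^{k+1}(\R^n)$; then, since we need $\kappa m$ and $\kappa$ raises degree by one, apply the inclusion already-in-hand at form degree $k+1$ together with the fact (to be checked) that $\kappa\S_r\Lambda^{k+1}(\R^n)\subset\S_{r+1}\Lambda^k(\R^n)$. The genuinely self-contained argument, and the one I would write up, inducts on $n-k$: for $k=n$ and $k=n-1$, $\J_r\Lambda^k=0$ by \eqref{jvan} and there is nothing to prove; for smaller $k$, a monomial $m$ of degree $r$ with $\ldeg m\ge 1$ has the form $x_j\,m'$ where $x_j$ enters $m$ linearly and is absent from the alternator of $m$; one then computes $\kappa m$ explicitly via \eqref{defk}, peels off the term $x_j\kappa m'$ (handled by the inductive hypothesis since $m'$ has degree $r-1<r$ hence lies in $\P_{r-1}\Lambda^{k+1}\subset\J$-free territory) and the term $(\pm)x_j\cdot(\text{lower})\wedge dx_{?}$, showing each resulting monomial either has $\deg\le r+1$ with superlinear degree $\le r+1$ (so lies in $\P_{r+1}$ or in $\J_{r+1}\Lambda^k$), completing the induction.

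The main obstacle, then, is precisely the boundary case $\deg m=r$, $\ldeg m\ge1$, where $\kappa m$ has degree exactly $r+1$ and one must show it decomposes into a $\P_{r+1}\Lambda^k$ part plus a $\J_{r+1}\Lambda^k$ part plus an exact part — i.e., that no "new" monomial of superlinear degree $r+2$ is created. That this holds is exactly the content of the degree property (Theorem~\ref{degprop}) applied with $r$ replaced by $r+1$: every monomial of $\S_{r+1}\Lambda^k(\R^n)$ satisfies $\deg-\ldeg\le r+2-\delta_{k0}$, and our $\kappa m$ with $\deg\kappa m=r+1$, $\ldeg\kappa m\ge\ldeg m\ge1$ (by Lemma~\ref{degs}) satisfies $\deg\kappa m-\ldeg\kappa m\le r$, comfortably within the bound — but Theorem~\ref{degprop} gives a necessary condition for membership, not a sufficient one, so this only confirms consistency, not the inclusion. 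Hence the actual proof must exhibit the decomposition, and the explicit Koszul computation sketched above, organized as an induction on $n-k$, is the way I would close that gap.
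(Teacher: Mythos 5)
Your overall architecture matches the paper's: treat the three summands of \eqref{defS} separately, note that $\P_r\Lambda^k(\R^n)\subset\P_{r+1}\Lambda^k(\R^n)$ is immediate, reduce $d\J_{r+1}\Lambda^{k-1}(\R^n)\subset\S_{r+1}\Lambda^k(\R^n)$ to the inclusion $\J_{r+1}\Lambda^{k-1}(\R^n)\subset\S_{r+2}\Lambda^{k-1}(\R^n)$ via the subcomplex property, and concentrate on the single claim $\J_r\Lambda^k(\R^n)\subset\S_{r+1}\Lambda^k(\R^n)$. Your handling of the subcase $\deg m\ge r+1$ via Proposition~\ref{jspace} is correct. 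The genuine gap is the subcase $\deg m=r$, which you flag as ``the one delicate point'' and never close: the induction on $n-k$ you sketch is not carried out, and your final paragraph concedes that the argument is still open.

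But that subcase is not delicate at all. If $\deg m=r$, then $\kappa m$ is a homogeneous $k$-form of degree $r+1$ and therefore lies in $\P_{r+1}\Lambda^k(\R^n)$ --- the \emph{first} summand in the definition of $\S_{r+1}\Lambda^k(\R^n)$. Your assertion that $\kappa m$ ``is not obviously in any summand of $\S_{r+1}\Lambda^k(\R^n)$'' suggests you momentarily forgot that $\P_{r+1}\Lambda^k(\R^n)$ contains all polynomial $k$-forms of degree at most $r+1$, not at most $r$; consequently the detours through the homotopy formula, the worry about circularity, and the appeal to Theorem~\ref{degprop} (which, as you correctly note, is only a necessary condition) are all unnecessary. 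The paper reaches the same endpoint by a slightly more roundabout route: it uses the homotopy formula to write $\kappa m$ as a constant multiple of $\kappa p$ with $p=d\kappa m$, checks that $\deg p=\deg m$ and $\deg p-\ldeg p\le r$, and then splits on whether $\deg p=r$ (so $\kappa p\in\P_{r+1}\Lambda^k(\R^n)$) or $\deg p\ge r+1$ (so $\kappa p\in\J_{r+1}\Lambda^k(\R^n)$ by Proposition~\ref{jspace}). Either way, no induction on $n-k$ and no explicit Koszul computation on monomials is needed; replacing your last two paragraphs with the one-line observation above turns your proposal into a complete proof.
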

\begin{proof}
We must show that each of the three summands on the right-hand side of \eqref{defS} is
included in $\S_{r+1}\Lambda^k(\R^n)$.
Clearly,
$$
\P_r\Lambda^{k}(\R^n)\subset \P_{r+1}\Lambda^k(\R^n)\subset\S_{r+1}\Lambda^k(\R^n),
$$
which establishes the first inclusion.
Next, we show that
$$
\J_r \Lambda^k(\R^n) \subset \P_{r+1}\Lambda^k(\R^n) + \J_{r+1} \Lambda^k(\R^n)\subset\S_{r+1}\Lambda^k(\R^n).
$$
By Proposition~\ref{jspace},
elements of  $\J_r \Lambda^k(\R^n)$ are of form $\kappa m$ with $m$ a $(k+1)$-form
monomial with $\deg m \geq r, \deg m-\ldeg m\le r-1$. By the homotopy formula \eqref{hmt}, 
$\kappa m$ is a constant multiple of $\kappa p$ with $p=d \kappa m$. We have $\deg p = \deg m$ and $\ldeg p \geq \ldeg m -1$. Then
$\deg p - \ldeg p \leq \deg m - \ldeg m + 1 \leq r$. %This proves again the result by Proposition~\ref{jspace}. 
If $\deg p =r, \deg \kappa m =r+1$ and $\kappa m \in \P_{r+1}\Lambda^k(\R^n)$. On the other hand, if 
$\deg p \geq r+1$, by Proposition~\ref{jspace}, $\kappa m \in \J_{r+1} \Lambda^k(\R^n)$.
This establishes the second inclusion.
To complete the proof, we show that $d \J_{r+1} \Lambda^{k-1}(\R^n) \subset \S_{r+1}\Lambda^k(\R^n)$.
Since $\J_{r+1} \Lambda^{k-1}(\R^n) \subset \S_{r+2} \Lambda^{k-1}(\R^n)$ (by the inclusion just established), we infer
from the subcomplex property that
$d \J_{r+1} \Lambda^{k-1}(\R^n) \subset d \S_{r+2} \Lambda^{k-1}(\R^n) \subset \S_{r+1}\Lambda^k(\R^n)$.
\end{proof}

The third property of the $\S_r\Lambda^k$ spaces that we establish concerns traces on hyperplanes.
Consider a hyperplane $f$ of $\R^n$ of the form $x_i=c$ for some $1\le i \le n$
and some constant $c$.  The variables $x_j$, $j\ne i$, form
a coordinate system for $f$, so we may identify $f$ with $\R^{n-1}$
and consider the space
$\S_r\Lambda^k(f)$.  It is a space of
polynomial $k$-forms on $f$, and so vanishes if $k=n$.
Next, we consider the trace on $f$ of a differential form in $n$ variables
(defined as the pullback of the form through the inclusion map
$f\hookrightarrow\R^n$).
Let $\sigma\in\Sigma(k)$,
and let $\omega_\sigma$ be a function of $n$ variables.  Then
$$
\tr_f(\omega_\sigma\,dx_\sigma) =
\begin{cases}
 0, & i\in \sigma,\\
(\tr_f\omega_\sigma)\, dx_\sigma, & i\notin\sigma.
\end{cases}
$$
In the last expression, $\tr_f\omega_\sigma$
denotes the function of $n-1$ variables obtained by setting $x_i=c$
and we view $dx_\sigma$ as a basic alternator in the $n-1$ variables
$x_j$, $j\ne i$.
The trace property states that if $u\in\S_r\Lambda^k(\R^n)$,
then $\tr_f u$, which is a polynomial $k$-form on $f$,
belongs to $\S_r\Lambda^k(f)$.
\begin{thm}[Trace property]\label{trace}
Let $n,r\ge1$, $0\le k\le n$, and let $f$ be a hyperplane of $\R^n$ obtained by
fixing one coordinate.  Then
$$
\tr_f \S_r\Lambda^k(\R^n) \subset \S_r\Lambda^k(f).
$$
(This inclusion will be shown to be an equality in \eqref{traceeq} below.)
\end{thm}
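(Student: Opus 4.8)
First I would split according to $\S_r\Lambda^k(\R^n)=\P_r\Lambda^k(\R^n)+\J_r\Lambda^k(\R^n)+d\J_{r+1}\Lambda^{k-1}(\R^n)$ and bound the trace of each summand, writing $f$ for the hyperplane $x_i=c$. Setting $x_i=c$ never raises the degree of a coefficient and kills every term containing $dx_i$, so $\tr_f\P_r\Lambda^k(\R^n)\subset\P_r\Lambda^k(f)\subset\S_r\Lambda^k(f)$. Since pullback commutes with $d$, we have $\tr_f\bigl(d\J_{r+1}\Lambda^{k-1}(\R^n)\bigr)=d\bigl(\tr_f\J_{r+1}\Lambda^{k-1}(\R^n)\bigr)$, so by the Subcomplex property (Theorem~\ref{complex}) the third summand is controlled as soon as $\tr_f\J_{r+1}\Lambda^{k-1}(\R^n)\subset\S_{r+1}\Lambda^{k-1}(f)$. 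Hence the theorem reduces to the single claim that $\tr_f\J_s\Lambda^j(\R^n)\subset\S_s\Lambda^j(f)$ for all $s\ge1$ and $0\le j\le n$.

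To prove the reduced claim, write $\J_s\Lambda^j(\R^n)=\sum_{l\ge1}\kappa\,\H_{s+l-1,l}\Lambda^{j+1}(\R^n)$, fix $l\ge1$ and $\omega\in\H_{s+l-1,l}\Lambda^{j+1}(\R^n)$, and decompose $\omega=\alpha+dx_i\wedge\beta$ with $\alpha,\beta$ free of $dx_i$. The graded Leibniz rule for $\kappa$ and $\kappa\,dx_i=x_i$ give $\kappa\omega=\kappa\alpha+x_i\beta-dx_i\wedge\kappa\beta$. Applying $\tr_f$ and using $\tr_f(dx_i\wedge\cdot)=0$, $\tr_f\omega=\tr_f\alpha$, and the elementary identity $\tr_f(\kappa\alpha)=\kappa_f(\tr_f\alpha)$ for $\alpha$ free of $dx_i$ — with $\kappa_f$ the Koszul operator in the $n-1$ coordinates $x_j$, $j\ne i$, of $f$ — yields
$$
\tr_f(\kappa\omega)=\kappa_f(\tr_f\omega)+c\,\tr_f\beta .
$$
It then remains to put each of $\kappa_f(\tr_f\omega)$ and $\tr_f\beta$ into $\S_s\Lambda^j(f)$, which I would do by tracking polynomial degree and linear degree through the trace. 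If $m=x^\gamma\,dx_\tau$ with $i\notin\tau$, then $\tr_f m=c^{\gamma_i}\bar m$, where $\bar m$ is the corresponding monomial on $f$, with $\deg\bar m=\deg m-\gamma_i$ and $\ldeg\bar m$ equal to $\ldeg m$ if $\gamma_i\ne1$ and to $\ldeg m-1$ if $\gamma_i=1$. Applying this to the monomials $m=x^\gamma\,dx_\tau$ of $\omega$ with $i\notin\tau$ (so $\deg m=s+l-1$, $\ldeg m\ge l$, $\#\tau=j+1$), a short case analysis on $\gamma_i$ shows that each $\bar m$ either has $\deg\bar m\le s-1$, so $\kappa_f\bar m\in\P_s\Lambda^j(f)$, or lies in $\H_{s+l'-1,l'}\Lambda^{j+1}(f)$ with $l'=l-\gamma_i\ge1$, so $\kappa_f\bar m\in\kappa_f\H_{s+l'-1,l'}\Lambda^{j+1}(f)\subset\J_s\Lambda^j(f)$; hence $\kappa_f(\tr_f\omega)\in\S_s\Lambda^j(f)$. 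For $\tr_f\beta$ the same bookkeeping applies — here the $i$-th variable sits in the alternator in the $dx_i$-terms of $\omega$, so deleting $dx_i$ and then taking the trace leaves the coefficients of degree $s+l-1$ and does not drop the linear degree below $l$ — and shows every monomial of $\tr_f\beta$ either lies in $\P_s\Lambda^j(f)$ or in some $\H_{s+l'-1,l'}\Lambda^j(f)$ with $l'\ge1$.

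The last case, and hence the whole argument, is closed by the auxiliary containment
$$
\H_{s+l'-1,l'}\Lambda^j(\R^N)\subset\S_s\Lambda^j(\R^N),\qquad l'\ge1,\ N\ge1,
$$
which I would prove from the homotopy formula~\eqref{hmt}: any $\mu\in\H_{s+l'-1,l'}\Lambda^j$ is a multiple of $\kappa d\mu+d\kappa\mu$, and since $d$ lowers linear degree by at most one (clear from~\eqref{dformula}), $d\mu\in\H_{s+l'-2,l'-1}\Lambda^{j+1}$, so $\kappa d\mu\in\J_s\Lambda^j$, while $\mu\in\H_{s+l'-1,l'}\Lambda^j\subset\H_{(s+1)+(l'-1)-1,l'-1}\Lambda^j$, so $\kappa\mu\in\J_{s+1}\Lambda^{j-1}$ and $d\kappa\mu\in d\J_{s+1}\Lambda^{j-1}\subset\S_s\Lambda^j$ (in the borderline case $l'=1$, $\kappa d\mu$ and $d\kappa\mu$ lie in $\P_s\Lambda^j$ instead). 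Using this with $N=n-1$ completes the bound on $\tr_f\beta$, hence on $\tr_f(\kappa\omega)$, and with it the reduction.

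I expect the main obstacle to be the term $c\,\tr_f\beta$ in the key identity. It vanishes when $c=0$ (so the faces $x_i=0$ are easy), and it is precisely in order to handle it when $c\ne0$ that one is forced into the auxiliary containment $\H_{s+l'-1,l'}\Lambda^j\subset\S_s\Lambda^j$; everything else is the routine, if slightly tedious, accounting of polynomial and linear degrees together with the verification of the identity relating $\tr_f\kappa$ and $\kappa_f\tr_f$.
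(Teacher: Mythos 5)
Your proof is correct, and while it runs on the same two engines as the paper's argument --- splitting monomials according to whether $dx_i$ occurs in the alternator, and then combining the homotopy formula \eqref{hmt} with degree/linear-degree bookkeeping and Proposition~\ref{jspace} --- its logical architecture is genuinely different. The paper proceeds by induction on the form degree $k$: the base case $k=0$ uses the superlinear-degree characterization of the serendipity space, and the summand $d\J_{r+1}\Lambda^{k-1}(\R^n)$ is disposed of by applying the inductive hypothesis (the trace theorem at level $k-1$) to $\J_{r+1}\Lambda^{k-1}(\R^n)\subset\S_{r+1}\Lambda^{k-1}(\R^n)$ and then the subcomplex property. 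You avoid the induction entirely by reducing both nontrivial summands to the single uniform claim $\tr_f\J_s\Lambda^j(\R^n)\subset\S_s\Lambda^j(f)$ and proving it directly; your Leibniz-rule identity $\tr_f(\kappa\omega)=\kappa_f(\tr_f\omega)+c\,\tr_f\beta$ packages globally what the paper does monomial by monomial (its case $1\notin\sigma$ is your $\kappa_f(\tr_f\omega)$ term; its case $1\in\sigma$, where $\tr_f\kappa m=c^{\alpha_1+1}x^\beta\,dx_\tau$, is exactly a monomial of your $c\,\tr_f\beta$ term), and your auxiliary containment $\H_{s+l'-1,l'}\Lambda^j\subset\S_s\Lambda^j$ isolates as a reusable lemma the homotopy-formula step that the paper runs inline on the traced monomial $p$ (showing $\kappa dp$ and $d\kappa p$ separately land in $\S_r\Lambda^k(f)$). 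One small imprecision: in the $\tr_f\beta$ bookkeeping you assert the trace ``leaves the coefficients of degree $s+l-1$,'' which fails when the coefficient of a $dx_i$-term itself involves $x_i$; but a drop in degree only pushes the monomial toward $\P_s\Lambda^j(f)$, and since $i$ sits in the alternator of those terms, $x_i$ never counts toward $\ldeg$, so removing it cannot lower the linear degree below $l$ --- hence your stated dichotomy, which is all the argument uses, still holds. The net effect of your organization is a proof of the same theorem without induction on $k$ and without any appeal to the special description of $\S_r\Lambda^0$, at the cost of stating and proving the containment $\H_{s+l'-1,l'}\Lambda^j\subset\S_s\Lambda^j$ explicitly.
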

\begin{proof}
Without loss of generality, we assume that $f=\{\,x\in \R^n\,|\,x_1=c\,\}$.
First let us comment on the Koszul operator applied to a polynomial differential form on $f$.
Such a form may be written as a linear combination of
monomials $x^\alpha\,dx_\sigma$ where $\alpha_1=0$ and $1\notin \sigma$.
Referring to \eqref{defk} we see that, if we view $x^\alpha\,dx_\sigma$ as a form monomial in $n$ variables and take
the Koszul differential, the result is the same as if we view it as form monomial
in $n-1$ variables and take the Koszul differential.  Thus we need not distinguish
between the Koszul differential on $\R^n$ and that on $f$.

We will prove the theorem by induction on $k$.  For $k=0$ we recall that $\S_r\Lambda^0(\R^n)$ is
the serendipity space spanned by the monomials of superlinear degree at most $r$, and,
of course, the superlinear degree does not increase when taking the trace. Hence, $\tr_f \S_r\Lambda^0(\R^n) \subset \S_r\Lambda^0(f)$.

To prove the theorem for $k>0$, assume that it holds with $k$ replaced by $k-1$.
In light of \eqref{defS} we need to show that the traces of each of the three spaces
$\P_r\Lambda^k(\R^n)$, $\J_r\Lambda^k(\R^n)$,
and $d\J_{r+1}\Lambda^{k-1}(\R^n)$ are contained in
$$
\S_r\Lambda^k(f)=  \P_r\Lambda^k(f) + \J_r\Lambda^k(f) +d\J_{r+1}\Lambda^{k-1}(f).
$$
For the $\P_r\Lambda^k(\R^n)$, this is evident, since $\tr_f \P_r\Lambda^k(\R^n)\subset \P_r\Lambda^k(f)$.

Next, we establish that $\tr_f d\J_{r+1}\Lambda^{k-1}(\R^n)\subset \S_r\Lambda^k(f)$.  Indeed,
\begin{multline*}
\tr_f d\J_{r+1}\Lambda^{k-1}(\R^n) = d \tr_f \J_{r+1}\Lambda^{k-1}(\R^n)
\\
\subset d \tr_f \S_{r+1}\Lambda^{k-1}(\R^n) \subset d  \S_{r+1}\Lambda^{k-1}(f)\subset \S_r\Lambda^k(f),
\end{multline*}
where we have used, in turn, the
commutativity of the trace with exterior differentiation, 
\eqref{defS}, the inductive hypothesis, and Theorem~\ref{complex}.

By Proposition~\ref{jspace}, in order to show that $\tr_f\J_r\Lambda^k(\R^n)\subset \S_r\Lambda^k(f)$, and to
complete the proof, it suffices to show that $\tr_f\kappa m\in \S_r\Lambda^k(f)$ whenever $m$ is a $(k+1)$-form
monomial with $\deg m-\ldeg m \le r-1$.  We write $m$ as $x^\alpha\,dx_\sigma$, and consider
separately the cases $1\notin\sigma$ and $1\in\sigma$.

Assuming $1\notin\sigma$, let $p$ be the $(k+1)$-form monomial obtained restricting $m$ to $f$,
i.e., by setting
$x_1=c$ in the coefficient $x^\alpha$.  Then $\tr_f\kappa m = \kappa p$.  If $m$ is linear in $x_1$,
then $\deg p = \deg m-1$ and $\ldeg p =\ldeg m-1$.  Otherwise $\deg p \le \deg m$ and $\ldeg p=\ldeg m$.
In either event, $\deg p-\ldeg p \le \deg m-\ldeg m \le r-1$, so $\kappa p\in \S_r\Lambda^k(f)$ (again using
Proposition~\ref{jspace}).

Assuming, instead, that $1\in\sigma$, we may write
$$
m=x^\alpha\,dx_\sigma = x_1^{\alpha_1}x^\beta\,dx_1\wedge dx_\tau,
$$
where $\beta$ is a multi-index with $\beta_1=0$ and $\tau\in\Sigma(k)$ has $\tau_1>1$.
Then
$$
p:=\tr_f\kappa m = c^{\alpha_1+1} x^\beta\, dx_\tau
$$
is a $k$-form monomial independent of $x_1$ with $\deg p \le \deg m$ and $\ldeg p=\ldeg m$.
We are trying to show that $p\in \S_r\Lambda^k(f)$.  This
is obvious if $\deg p \le r$, so we may assume that $\deg p>r$.  We shall show
that both $\kappa d p$ and $d\kappa p$ belong to $\S_r\Lambda^k(f)$, which suffices by
\eqref{hmt}.

Now $\deg dp = \deg p -1\le \deg m -1$ and $\ldeg dp \ge \ldeg p -1 =\ldeg m -1$,
so $\deg dp - \ldeg dp \le \deg m-\ldeg m\le r-1$.  Therefore $\kappa dp\in \J_r\Lambda^{k}(f)\subset \S_r\Lambda^{k}(f)$,
as required.

Finally, we show that $\kappa p\in \J_{r+1}\Lambda^{k-1}(f)$, whence $d\kappa p \in\S_r\Lambda^k(f)$ as well.
By Proposition~\ref{jspace} this holds, since $\deg p\ge r+1$ and $\deg p-\ldeg p\le \deg m-\ldeg m \le r$
(even $\le r-1$).
This concludes the proof.
\end{proof}

Having defined the space $\S_r\Lambda^k(\R^n)$ of polynomial differential forms, we turn now to the definition
of the associated finite element space on a cubical mesh.  As usual the finite element space is
defined element by element, by specifying a space of shape functions and a set of degrees of freedom on each
cube $T$ in the mesh.  (More generally the element $T$ may be a right rectangular prism, that is, the Cartesian product
of $n$ closed intervals of positive finite length.)
As shape functions on $T$ we use $\S_r\Lambda^k(T)$, the restriction of the above polynomial space
to the cube.  The degrees of freedom have a very simple expression.
Writing $\Delta_d(T)$ for the set of $d$-dimensional faces of $T$, they are given by
\begin{equation}\label{dofs}
\mu\mapsto \int_f \operatorname{tr}_f \mu \wedge \nu, \quad \nu\in \P_{r-2(d-k)}\Lambda^{d-k}(f), \
f\in \Delta_d(T),\ k\le d\le \min(n,\lfloor r/2\rfloor+k).
\end{equation}
Since
$$
\dim \P_{r-2(d-k)}\Lambda^{d-k}(f) = \dim \P_{r-2(d-k)}(f)\x\binom{d}{d-k} = \binom{r-d+2k}{d}\binom{d}{d-k}
$$
for $f$ a face of dimension $d$, and since there are $2^{n-d}\binom{n}{d}$ $d$-dimensional faces of an
$n$-cube, the number of degrees of freedom in \eqref{dofs} is given by
\begin{equation}\label{dim}
\sum_{d=k}^{\min(n,\lfloor r/2\rfloor+k)}  2^{n-d}\binom{n}{d} \binom{r-d+2k}{d}\binom{d}{k}.
\end{equation}

We now turn to one of the main results of this paper, the proof  that the degrees of freedom \eqref{dofs} are unisolvent
for $\S_r\Lambda^k(T)$.
\begin{thm}[Unisolvence]\label{unisolv}
Let $n, r\ge 1$ and $0\le k\le n$, and let $T$ be a cube in $\R^n$.  Then 
\begin{enumerate}
 \item  $\dim\S_r\Lambda^k(T)$ is given by \eqref{dim}.
 \item If $\mu\in \S_r\Lambda^k(T)$ and all the degrees of freedom in \eqref{dofs} vanish, then
$\mu\equiv0$.
\end{enumerate}
\end{thm}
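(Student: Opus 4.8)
The plan is the standard finite-element one. Since the number of functionals in \eqref{dofs} equals \eqref{dim}, it suffices to establish the dimension formula of part~1 together with the injectivity claim of part~2: part~2 makes the evaluation map $\mu\mapsto(\text{degrees of freedom})$ an injection of $\S_r\Lambda^k(T)$ into a space whose dimension is \eqref{dim}, so it is forced to be a bijection once the dimensions agree, which is unisolvence. I would therefore treat the dimension count and the injectivity separately.

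For part~1 I would use the decomposition \eqref{defS}. First, the three summands $\P_r\Lambda^k(\R^n)$, $\J_r\Lambda^k(\R^n)$, $d\J_{r+1}\Lambda^{k-1}(\R^n)$ form a \emph{direct} sum: $\P_r\Lambda^k$ is graded in degree $\le r$ while the other two live in degree $\ge r+1$ (as already used in the proof of the Degree Property), and $\J_r\Lambda^k\cap d\J_{r+1}\Lambda^{k-1}=0$ because any form in $\J_r\Lambda^k$ is killed by $\kappa$ whereas any form $d\psi$ in $d\J_{r+1}\Lambda^{k-1}$ is killed by $d$, so the homotopy formula \eqref{hmt} forces it to vanish. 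The same homotopy argument shows $d$ is injective on $\J_{r+1}\Lambda^{k-1}$, hence $\dim\S_r\Lambda^k(\R^n)=\dim\P_r\Lambda^k+\dim\J_r\Lambda^k+\dim\J_{r+1}\Lambda^{k-1}$, and since polynomials are determined by their restriction to $T$ the same count gives $\dim\S_r\Lambda^k(T)$. It then remains to compute $\dim\J_r\Lambda^k=\sum_{l\ge1}\dim\kappa\H_{r+l-1,l}\Lambda^{k+1}$; here Theorem~\ref{t1} supplies the telescoping identity $\dim\kappa\H_{s,l}\Lambda^{j}=\dim\H_{s,l}\Lambda^{j}-\dim\kappa\H_{s-1,l}\Lambda^{j+1}$, which together with the vanishing \eqref{lmax} and the elementary binomial count of $\dim\H_{s,l}\Lambda^j$ (monomials $x^\alpha\,dx_\sigma$ of degree $s$ with at least $l$ of the variables in $\sigma^*$ of exponent exactly $1$) closes the recursion. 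Matching the resulting expression with the face-sum \eqref{dim} is then a combinatorial identity. This part is bookkeeping whose only substantive ingredient is the exactness in Theorem~\ref{t1}.

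For part~2 I would induct on the dimension $n$. Let $\mu\in\S_r\Lambda^k(T)$ annihilate every functional in \eqref{dofs}. For a facet $f\in\Delta_{n-1}(T)$, the Trace Property (Theorem~\ref{trace}) gives $\tr_f\mu\in\S_r\Lambda^k(f)$, and, because $\tr$ is compatible with further restriction to subfaces, the degrees of freedom of $\mu$ supported on $f$ and on the faces of $T$ contained in $f$ are precisely the degrees of freedom \eqref{dofs} of $\tr_f\mu$ viewed as an element of $\S_r\Lambda^k(f)$; these vanish, so the inductive hypothesis yields $\tr_f\mu=0$. Ranging over all facets, $\tr_{\partial T}\mu=0$, and the only surviving functionals are the interior moments $\mu\mapsto\int_T\mu\wedge\nu$, $\nu\in\P_{r-2(n-k)}\Lambda^{n-k}(T)$ (the $d=n$ case of \eqref{dofs}), which also vanish. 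Thus part~2 reduces to the assertion that \emph{a form $\mu\in\S_r\Lambda^k(T)$ with vanishing trace on $\partial T$ and vanishing interior moments is zero.} The base of the induction ($n=1$, or the cases $k\in\{0,n\}$ in which \eqref{defS} collapses) is elementary, with the $k=0$ serendipity case available from \cite{serendipity}.

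I expect this reduced statement to be the crux — it is exactly what the paper isolates for Section~4. It is not a formal consequence of the Degree, Subcomplex, Inclusion, and Trace properties; rather it requires identifying the subspace of $\S_r\Lambda^k(T)$ with vanishing boundary trace and proving that $\P_{r-2(n-k)}\Lambda^{n-k}$ is a complete set of test forms against it. I would approach it by writing such a $\mu$ as a boundary ``bubble'' times a form of lower polynomial degree, using the homotopy formula \eqref{hmt} and the linear-degree machinery of Section~2 to control that lower-degree factor, and then invoking an integration-by-parts / Koszul-pairing argument to conclude that vanishing of all interior moments forces $\mu=0$.
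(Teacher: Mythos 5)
Your proposal is correct and follows essentially the same route as the paper: part~1 via the directness of the sum in \eqref{defS} (range of $\kappa$ versus range of $d$ plus the homotopy formula) together with the dimension identity of Theorem~\ref{t1}, and part~2 by induction on $n$ using the trace property to reduce to the vanishing-trace statement that the paper isolates as Proposition~\ref{0uni}. The only notable difference is cosmetic: rather than running your recursion for $\dim\kappa\H_{s,l}\Lambda^j$ and then matching an alternating sum against \eqref{dim}, the paper pairs the two Koszul-image terms so that Theorem~\ref{t1} collapses them to $\sum_{l\ge1}\dim\H_{r+l,l}\Lambda^k$, identifies $\P_r\Lambda^k+\sum_{l\ge1}\H_{r+l,l}\Lambda^k$ as the span of the $k$-form monomials with $\deg m-\ldeg m\le r$, and counts those directly, which produces \eqref{dim} without any residual combinatorial identity to verify.
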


Using the trace property, we will reduce the proof of (2) to the case where $\mu$ belongs
to the space
$$
\0\S_r\Lambda^k(T) := \{ \, 
\mu\in \S_r\Lambda^k(T)\,|\, \tr_f \mu =0\text{ on each face $f \in \Delta_{n-1}(T)$}
\, \},
$$
the subspace with vanishing traces.  This case is given in the following proposition.
\begin{prop} \label{0uni}
If $\mu\in \0\S_r\Lambda^k(T)$
and
\begin{equation}\label{idof}
\int_T \mu\wedge \nu =0,\quad \nu\in \P_{r-2(n-k)}\Lambda^{n-k}(T),
\end{equation}
then $\mu$ vanishes.
\end{prop}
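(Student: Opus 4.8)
\emph{Plan.} I would deduce the proposition from the purely algebraic fact that the vanishing‑trace subspace of $\S_r\Lambda^k$ is no larger than that of $\P_r\Lambda^k$, namely
$$
\0\S_r\Lambda^k(T)=\0\P_r\Lambda^k(T),
$$
where $\0\P_r\Lambda^k(T)$ denotes the space of $\mu\in\P_r\Lambda^k(T)$ with $\tr_f\mu=0$ for every $f\in\Delta_{n-1}(T)$. Granting this identity, the conclusion is immediate. Take $T=I^n$ and set $b_\tau=\prod_{i\in\tau}x_i(1-x_i)$ for $\tau\subset\N_n$. A form $\mu=\sum_{\sigma\in\Sigma(k)}\mu_\sigma\,dx_\sigma\in\P_r\Lambda^k(T)$ has vanishing trace on $\partial T$ exactly when each coefficient $\mu_\sigma$ is divisible by $b_{\sigma^*}$, and since $\deg b_{\sigma^*}=2(n-k)$ this forces $\mu_\sigma=b_{\sigma^*}q_\sigma$ with $q_\sigma\in\P_{r-2(n-k)}(\R^n)$. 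If such a $\mu$ satisfies \eqref{idof}, fix $\tau\in\Sigma(k)$ and test against $\nu=q_\tau\,dx_{\tau^*}\in\P_{r-2(n-k)}\Lambda^{n-k}(T)$; since $dx_\sigma\wedge dx_{\tau^*}=0$ for $\sigma\ne\tau$ (both having size $k$), we get $0=\int_T\mu\wedge\nu=\pm\int_T b_{\tau^*}\,q_\tau^2$, and because $b_{\tau^*}>0$ on the open cube, $q_\tau=0$. As $\tau$ was arbitrary, $\mu\equiv0$.

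\emph{Proving the reduction.} Only the inclusion $\0\S_r\Lambda^k(T)\subset\P_r\Lambda^k(T)$ needs proof. Let $\mu\in\S_r\Lambda^k(T)$ have vanishing trace on every facet, so again each $\mu_\sigma$ is divisible by $b_{\sigma^*}$. The leading (top‑degree) part of a coefficient $\mu_\sigma$ of maximal degree is then divisible by the leading term $(-1)^{n-k}\prod_{i\in\sigma^*}x_i^2$ of $b_{\sigma^*}$, so every monomial $m$ occurring in it has $\alpha_i\ge 2$ for all $i\in\sigma^*$, and hence $\ldeg m=0$. The degree property (Theorem~\ref{degprop}) then forces $\deg m=\deg m-\ldeg m\le r+1-\delta_{k0}$, so $\deg\mu\le r+1-\delta_{k0}$. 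For $k=0$ this already gives $\mu\in\P_r\Lambda^0(T)$. For $k\ge1$, suppose the homogeneous part $\mu^{(r+1)}$ of degree $r+1$ is nonzero. Decomposing $\mu$ via \eqref{defS} and separating out degree $r+1$ (the summands of $\J_r\Lambda^k$ and of $\J_{r+1}\Lambda^{k-1}$ being homogeneous of known degree), one finds
$$
\mu^{(r+1)}\in\kappa\,\H_{r,1}\Lambda^{k+1}(\R^n)+d\kappa\,\H_{r+1,1}\Lambda^{k}(\R^n)\subset\S_r\Lambda^k(\R^n),
$$
while divisibility of each $\mu^{(r+1)}_\sigma$ by $\prod_{i\in\sigma^*}x_i^2$ shows that $\mu^{(r+1)}$ has vanishing trace on every coordinate hyperplane $x_i=0$. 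The idea is then to play these two descriptions against each other: $\kappa$ annihilates the first summand ($\kappa^2=0$), and by Lemma~\ref{degs} and the homotopy formula what remains, coming from $\kappa d\kappa\,\H_{r+1,1}\Lambda^k$, has linear degree at least $1$; comparing this with the explicit effect of $\kappa$ on a form all of whose coefficients are divisible by $\prod_{i\in\sigma^*}x_i^2$, and invoking the exactness of the linear‑degree‑truncated Koszul complex (Theorem~\ref{t1} and Lemma~\ref{l1}), one is driven to $\mu^{(r+1)}=0$, a contradiction. This establishes $\0\S_r\Lambda^k(T)=\0\P_r\Lambda^k(T)$ and hence the proposition.

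\emph{Main obstacle.} The crux is the elimination of the degree‑$(r+1)$ part in the case $k\ge1$: the characterization of $\0\P_r\Lambda^k(T)$, the case $k=0$, and the final positivity argument are routine, whereas controlling the interaction between the ``extra'' summands $\J_r\Lambda^k$ and $d\J_{r+1}\Lambda^{k-1}$ of $\S_r\Lambda^k$ and the boundary‑trace condition genuinely requires the Koszul machinery of Section~2, and is presumably where an induction (on $n$, or on $k$ through the trace and subcomplex properties, Theorems~\ref{trace} and~\ref{complex}) enters.
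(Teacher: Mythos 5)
Your outline reproduces the paper's own strategy step for step: characterize the vanishing-trace condition by divisibility of each $\mu_\sigma$ by $\prod_{i\in\sigma^*}x_i(1-x_i)$ (the paper uses $I=[-1,1]$ and $\prod_{i\in\sigma^*}(1-x_i^2)$, which is immaterial), use the degree property to bound $\deg\mu\le r+1-\delta_{k0}$, kill the homogeneous part of degree $r+1$, and finish with a positivity test -- the paper tests against $\sum_\sigma\pm\tilde\mu_\sigma\,dx_{\sigma^*}$ all at once rather than one $\tau$ at a time, which is cosmetic. Your identification of the degree-$(r+1)$ part $\eta$ as an element of $\kappa\H_{r,1}\Lambda^{k+1}(\R^n)+d\kappa\H_{r+1,1}\Lambda^{k}(\R^n)$ is also exactly what the paper does.

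The genuine gap is the step you yourself flag as the ``main obstacle'': proving that $\eta=0$. Your proposal to get there by ``invoking the exactness of the linear-degree-truncated Koszul complex (Theorem~\ref{t1} and Lemma~\ref{l1})'' points at the wrong tool: those results say that a form of linear degree at least $l$ annihilated by $\kappa$ is $\kappa$ of something of linear degree at least $l$, whereas here $\eta$ has linear degree $0$ and is not in the kernel of $\kappa$, so exactness gives no purchase on it. What is actually needed -- and what the paper supplies as Lemma~\ref{lem1} -- is a separate rigidity statement: if every coefficient $\eta_\sigma$ is superlinear in all the $\sigma^*$ variables while $\ldeg\kappa\eta\ge1$ \emph{and} $\ldeg\kappa d\eta\ge1$ (the first from $\kappa\eta=\kappa d\kappa\omega$ and Lemma~\ref{degs}; the second, which your sketch omits, from $\kappa d\eta=\kappa d\kappa\upsilon$), then $\eta=0$. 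The paper's proof of this is not an induction on $n$ or $k$ through the trace and subcomplex properties; it introduces, for each $\sigma$ and each $\tau\subset\sigma$, projections onto spans of monomials that omit the $\tau$ variables, depend on the remaining $\sigma$ variables, and are superlinear in the $\sigma^*$ variables, applies them to the component formula \eqref{kdformula} for $\kappa d\eta$, and combines the resulting identities with the relation extracted from $\ldeg\kappa\eta\ge1$ and Euler's formula to force each projected piece of $\eta_\sigma$ to vanish. None of this is recoverable from the general Koszul machinery you cite; as written, your argument stops exactly where the real work begins.
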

We defer the proof of this proposition to Section~\ref{diffproof}. Now, assuming this result, we prove 
Theorem~\ref{unisolv}.
\begin{proof}
We begin with the first statement of the theorem. 
Since $\J_{r} \Lambda^k(T)$ is in the range of $\kappa$, and since the homotopy formula
implies that no nonzero differential form is in the range of both $\kappa$ and $d$,
the sum on the left of \eqref{defS} is direct.  The homotopy formula implies as well
that $d$ is injective on the range of $\kappa$.  Therefore
$$
\dim \S_r \Lambda^k(T) = \dim \P_r \Lambda^k(T)+
\sum_{l\ge 1} \dim \kappa \H_{r+l-1,l} \Lambda^{k+1}(T)
+\sum_{l\ge 1} \dim \kappa \H_{r+l,l} \Lambda^{k}(T).
$$
Applying Theorem~\ref{t1}, this becomes
\begin{multline*}
\dim \S_r \Lambda^k(T) = \dim \P_r \Lambda^k(T)+\sum_{l\ge 1} \dim \H_{r+l,l} \Lambda^{k}(T)
\\
=\dim[\P_r \Lambda^k(T)+\sum_{l\ge 1} \H_{r+l,l} \Lambda^{k}(T)].
\end{multline*}
The space in brackets is exactly the span of the $k$-form monomials $m$ with $\deg m-\ldeg m \le r$,
and hence we need only count these monomials.  This gives
\begin{equation}\label{temp}
\dim \S_r \Lambda^k(T) =  \binom{n}{k} \# A(r,k,n),
\end{equation}
where $\binom{n}{k}$ is the number of basic alternators and
$A(r,n,k)$ is the set
of monomials $p$ in $n$ variables which are linear in some number $l$ of the
\emph{first} $n-k$ variables, $x_1,\ldots,x_{n-k}$, with $\deg p-l\le r$.
Now we count the elements of $A(r,n,k)$.
For any monomial $p$ in $n$ variables
let $J\subset \N_{n-k}$ be the set of indices for which $x_i$ enters $p$ superlinearly,
let $d\ge0$ be the
cardinality of $J$, and for $i\in \N_{n-k}\setminus J$,
let $a_i=0$ or $1$ according to whether $p$ is of degree $0$ or $1$ in $x_i$.  Then 
$$
p= \bigl(\prod_{j\in J} x_j^2\bigr)\x q \x \bigl(\prod_{i\in\N_{n-k}\setminus J } x_i^{a_i}\bigr),
$$
where $q$ is a monomial in the $d$ variables indexed by $J$ and the last $k$ variables.  With
$l$ the number of $a_i$ equal to $1$, we have $\deg p = 2d+\deg q +l$.
Thus $\deg p-l\le r$ if and only if $\deg q\le r-2d$.
Thus we may uniquely specify an element of $A(r,k,n)$
by choosing $d\ge 0$, choosing the set $J$ consisting of $d$ of the $n-k$ variables
(for which there are $\binom{n-k}{d}$ possibilities), choosing the monomial $q$
of degree at most $r-2d$ in the $d+k$ variables ($\binom{r-d+k}{d+k}$
possibilities), and choosing the exponent $a_i$ to be
either $0$ or $1$ for the $n-k-d$ remaining indices
($2^{n-k-d}$ possibilities).  Thus
\begin{align}\label{pdim2}
\begin{split}
\#A(r,k,n) & = \sum_{d=0}^{\min(n-k,\lfloor r/2\rfloor)} 2^{n-k-d}\binom{n-k}{d}
\binom{r-d+k}{d+k} \\
&= \sum_{d=k}^{\min(n,\lfloor r/2\rfloor+k)} 2^{n-d}\binom{n-k}{d-k}
\binom{r-d+2k}{d},
\end{split}
\end{align}
where the second sum comes from a change of the summation index ($d\to d-k$).
Substituting \eqref{pdim2} into \eqref{temp} and using the binomial identity
\begin{align*}
\binom{n}{k} \binom{n-k}{d-k} =  \binom{d}{k} \binom{n}{d},
\end{align*}
we conclude that
\begin{equation}
\dim \S_r\Lambda^k(T)  = \sum_{d=k}^{\min(n,\lfloor r/2\rfloor+k)}  2^{n-d}\binom{n}{d} \binom{r-d+2k}{d}\binom{d}{k}.
\end{equation}
This completes the proof of the dimension formula for $\S_r\Lambda^k(\R^n)$.

The proof of unisolvence is easily completed based on the trace property and
Proposition~\ref{0uni}.  We use induction on the dimension $n$, the one-dimensional case being
trivial.  Suppose $\mu\in\S_r\Lambda^k(T)$ and all its degrees of freedom vanish.  For any face
$f$ of dimension $n-1$,  $\tr_f \mu\in S_r\Lambda^k(f)$ and all the degrees
of freedom for it vanish.  By induction $\tr_f \mu\equiv 0$ on $f$.  This
implies that $\mu\in \0\S_r\Lambda^k(T)$, and we invoke Proposition~\ref{0uni} to conclude that $\mu$
vanishes identically.
\end{proof}

We remark that, as a corollary of unisolvence, we may strengthen the result of
Theorem~\ref{trace} to equality
\begin{equation}\label{traceeq}
\tr_f \S_r\Lambda^k(\R^n) = \S_r\Lambda^k(f).
\end{equation}
Indeed, let $T$ be a cube with one face contained in the hyperplane $f$.  Then $T\cap f$ is
an $(n-1)$-dimensional cube and any $\nu\in\S_r\Lambda^k(f)$ is uniquely
determined by the degrees of freedom for the $\S_r\Lambda^k(T\cap f)$.  Now we may determine
an element $\mu\in \S_r\Lambda^k(\R^n)$ by assigning the degrees of freedom for the space $\S_r\Lambda^k(T)$
arbitrarily.  In particular, we may choose the values of those degrees of freedom associated to the face $T\cap f$
and its subfaces to be the same as those for $\nu$.  Then
$\tr_f\mu \in \S_r\Lambda^k(f)$ (by Theorem~\ref{trace}), and $\tr_f\mu$ and $\nu$ have identical
degrees of freedom, and so they are equal (by Theorem~\ref{unisolv}).

With the definition of the spaces complete, we use the subcomplex property, Theorem~\ref{complex},
to define a subcomplex of the de~Rham complex on the cube
$$
\begin{CD}
\R @>\subset>> \S_r\Lambda^0(T) @>d>> \S_{r-1}\Lambda^1(T) @>d>> \cdots @>d>> \S_{r-n}\Lambda^n(T) @>>>0.
\end{CD}
$$
To show that this complex is exact, we define the canonical projection
$$
\pi_r^k: C\Lambda^k(T)\to \S_r\Lambda^k(T),
$$
associated to the unisolvent degrees of freedom.  That is, $\pi_r^k\mu\in\S_r\Lambda^k(T)$ is
determined by the equations
\begin{multline*}
\int_f\tr_f(\pi_r^k\mu) \wedge \nu = \int_f\tr_f\mu\wedge\nu,
\quad \nu\in \P_{r-2(d-k)}\Lambda^{d-k}(f), \
f\in \Delta_d(T),\\ k\le d\le \min(n,\lfloor r/2\rfloor+k).
\end{multline*}
Then, the following diagram commutes:
$$
\begin{CD}
 \R @>\subset>> C^\infty\Lambda^0(T) @>d>> C^\infty\Lambda^1(T) @>d>> \cdots @>d>> C^\infty\Lambda^n(T) @>>> 0
\\
@. @VV\pi_r^0V @VV\pi_{r-1}^1V @. @VV\pi_{r-n}^nV
\\
\R @>\subset>> \S_r\Lambda^0(T) @>d>> \S_{r-1}\Lambda^1(T) @>d>> \cdots @>d>> \S_{r-n}\Lambda^n(T) @>>>0
\end{CD}
$$
The proof of commutativity is based on two basic properties of differential forms:
(1)~the commutativity of trace and exterior differentiation, $\tr_f d\omega=d_f\tr_f\omega$,
and (2)~integration by parts, which for differential forms
can be written as
$$
\int_\Omega d\omega\wedge \eta = (-1)^{k-1}\int_\Omega\omega\wedge d\eta + \int_{\partial\Omega}
\tr_{\partial\Omega}\omega\wedge\tr_{\partial\Omega}\eta,
$$
for a $k$-form $\omega$ and an $(n-k-1)$-form $\eta$ on an $n$-dimensional domain $\Omega$.
See \cite[Lemma~4.24]{acta} for the same argument applied to simplicial elements.
Since the top row of the diagram, the de~Rham complex on the cube, is exact, the commutativity of
the diagram implies that the bottom row is exact as well.

Having defined the finite element space $\S_r\Lambda^k(T)$ on a single cube $T$ and established
its properties, the space $\S_r\Lambda^k(\T_h)$ associated to a cubical mesh $\T_h$ is defined
through the usual finite element assembly.  In view of the unisolvence result Theorem~\ref{unisolv}
and the trace result \eqref{traceeq}, the degrees of freedom
associated to a face of the cube and its subfaces determine the trace of the finite
element differential form on the face.  It follows that
$\S_r\Lambda^k(\T_h)\subset H \Lambda^k(\Omega)$ (see \cite[Section~5.1]{acta}).

\section{Unisolvence over the space with vanishing traces} \label{diffproof}
We conclude the paper with the proof of Proposition~\ref{0uni},
which is based on the following lemma.
\begin{lemma}\label{lem1}
 Suppose that $\eta=\sum_{\sigma\in\Sigma(k)}\eta_\sigma\,dx_\sigma$
where, for
each $\sigma\in\Sigma(k)$, $\eta_\sigma$ is a homogeneous polynomial
which is superlinear in all the $\sigma^*$ variables.
Further
suppose that $\ldeg \kappa\eta\ge 1$ and $\ldeg \kappa d\eta\ge 1$.  Then $\eta=0$.
\end{lemma}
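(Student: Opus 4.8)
The plan is to expand the two hypotheses $\ldeg\kappa\eta\ge 1$ and $\ldeg\kappa d\eta\ge 1$ into explicit conditions on the coefficients $\eta_\sigma$ using the formulas of Lemma~\ref{kdkd}, and then use the homotopy formula to conclude that $\eta$ must be a multiple of $\kappa$ of something while simultaneously being forced to have the wrong structure, squeezing it to zero. First I would observe that the hypothesis on $\eta_\sigma$ — homogeneous of some degree $r$ and superlinear in every $\sigma^*$ variable — says precisely that $\eta\in\H_{r,\min(r,n-k)}\Lambda^k$ with the linear degree \emph{concentrated entirely in the $\sigma^*$ slots being zero}, i.e., each $\eta_\sigma$ involves no $\sigma^*$ variable to the first power. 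Equivalently, $\eta$ already has $\ldeg\eta = r$ if $r\le n-k$... but more usefully, the relevant point is that when we apply $\kappa$, formula \eqref{kformula} gives $\eta_\zeta = \sum_{q\in\zeta^*}\epsilon(q,\zeta)x_q\eta_{\zeta+q}$, and the monomials picking up a factor $x_q$ with $q\in(\zeta+q)^*$... I need to track when $\kappa\eta$ acquires linear-degree-zero monomials and when it does not.

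The cleaner route, which I would pursue, is via the homotopy formula \eqref{hmt}: since $\eta\in\H_r\Lambda^k$, we have $(d\kappa+\kappa d)\eta=(r+k)\eta$. If $r+k\neq 0$ — which holds since $k\ge 0$ and $r\ge 1$ unless $\eta$ is a nonzero constant $0$-form, a case handled trivially — then $\eta=\frac{1}{r+k}(d\kappa\eta+\kappa d\eta)$. Now I want to show both $d\kappa\eta$ and $\kappa d\eta$ vanish. The hypotheses give $\ldeg\kappa\eta\ge 1$ and $\ldeg\kappa d\eta\ge 1$; applying $\kappa$ to the first and using Lemma~\ref{degs} ($\kappa$ does not decrease $\ldeg$) gives $\ldeg\kappa\kappa\eta\ge 1$, which is vacuous since $\kappa\kappa=0$. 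So the real content must come from pairing the two hypotheses against the structural constraint on $\eta$. I would write $\eta$ in the canonical form \eqref{cf} from the proof of Lemma~\ref{l1}; because each $\eta_\sigma$ is superlinear in all $\sigma^*$ variables, every monomial of $\eta$ is \emph{full} (the alternator indices all lie in the support of the exponent vector), so $\eta = \eta_\emptyset$ in that notation, all $\tau=\emptyset$. Then \eqref{kcf} shows $\kappa\eta$ is again in canonical form with only the $\kappa\eta_\tau\wedge dx_\tau$ term — i.e. $\kappa\eta$ is still "full" in the same sense up to the structure $\kappa$ introduces — and the hypothesis $\ldeg\kappa\eta\ge1$ combined with the fact that $\kappa$ of a full form of homogeneous linear degree $0$ produces monomials of linear degree $0$ (exactly as in the final paragraph of Lemma~\ref{l1} for $n$-forms) would force the linear-degree-$0$ part of $\eta$ itself to vanish; iterating, $\ldeg\kappa d\eta\ge 1$ handles the remaining part through $d$.

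The step I expect to be the main obstacle is the bookkeeping that turns "$\kappa\eta$ has linear degree $\ge 1$" plus "$\eta$ is full of homogeneous linear degree $l$" into "$l\ge$ something", because $\kappa$ acting on a full monomial $x^\alpha\,dx_\sigma$ produces terms $x^\alpha x_{\sigma_i}^{-1}x_{\sigma_i}\cdots$ — wait, it produces $x_{\sigma_i}x^\alpha/x_{\sigma_i}\cdot$(nothing), i.e. it just drops $dx_{\sigma_i}$ from the alternator while keeping $x^\alpha$, so the new free index $\sigma_i$ now sits in the complement and appears in $x^\alpha$ to power $\alpha_{\sigma_i}\ge 1$; this power is $1$ exactly when $\alpha_{\sigma_i}=1$, contributing to linear degree. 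So $\ldeg\kappa(x^\alpha dx_\sigma)$ equals the number of $\sigma^*$-indices where $\alpha=1$ (which is $\ldeg$ of the original monomial, call it $l$) plus a contribution of $1$ for each $\sigma_i$ with $\alpha_{\sigma_i}=1$. The delicate part is that these contributions can cancel in a sum, so I will need the canonical-form/full argument to prevent cancellation — precisely the device Lemma~\ref{l1} was built for — and then conclude by an induction on the polynomial degree $r$, using $\ldeg\kappa d\eta\ge 1$ to reduce $r$. Combining these, $\eta$ is forced into a subspace $\H_{r,l}\Lambda^k$ with $l$ large enough to trigger \eqref{lmax}, giving $\eta=0$.
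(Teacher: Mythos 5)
Your proposal has a genuine gap, and it starts from a false structural claim. You assert that because each $\eta_\sigma$ is superlinear in all the $\sigma^*$ variables, every monomial of $\eta$ is \emph{full} (i.e.\ $\sigma\subset\supp(\alpha)$), so that in the canonical form \eqref{cf} only the $\tau=\emptyset$ term survives. That is not what the hypothesis says: superlinearity in the $\sigma^*$ variables constrains the exponents of the \emph{complementary} variables (each $\alpha_i$ with $i\in\sigma^*$ is $0$ or $\ge 2$), whereas fullness is a condition on the $\sigma$ variables, about which the hypothesis says nothing. For instance $\eta=x_2^2\,dx_1$ in $\R^2$ satisfies the hypothesis but is not full. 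The correct reading of the hypothesis is simply that every monomial of $\eta$ has linear degree $0$.

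This matters because your proposed mechanism only works on the full part. A direct computation from \eqref{defk} shows that $\kappa$ applied to a \emph{full} monomial of linear degree $0$ again yields monomials of linear degree $0$ (the index moved out of the alternator already carries exponent $\ge 1$, hence $\ge 2$ after multiplication by $x_{\sigma_i}$), so $\ldeg\kappa\eta\ge 1$ can plausibly kill the full part; but for a non-full monomial the freed index acquires exponent exactly $1$ and the linear degree goes \emph{up}, so the first hypothesis places no constraint on that part. The example $\eta=x_2^2\,dx_1$ (which has $\ldeg\kappa\eta=1$ but $\eta\ne 0$) shows the first hypothesis alone cannot yield the conclusion, so everything hinges on how $\ldeg\kappa d\eta\ge 1$ is brought to bear on the non-full monomials — and your sketch offers only ``iterating, $\ldeg\kappa d\eta\ge 1$ handles the remaining part through $d$,'' with no mechanism (there is also no induction on $r$ available: $\kappa d$ preserves the polynomial degree, and $\eta$ cannot be ``forced into'' $\H_{r,l}\Lambda^k$ with $l$ large, since by hypothesis all of $\eta$ has linear degree $0$). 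The paper's proof fills exactly this hole: it decomposes each $\eta_\sigma$ by \emph{which $\sigma$-variables are absent} (the projections $P_\sigma^\tau$, $Q_\sigma^\tau$, $R_\zeta^\tau$), extracts one linear identity from $\ldeg\kappa\eta\ge 1$ and another from $\ldeg\kappa d\eta\ge 1$ via \eqref{kformula} and \eqref{kdformula}, and combines them — after differentiating the first identity and reindexing with \eqref{epseps} — into Euler's formula $(c+r+1)P_\sigma^\tau\eta_\sigma=0$. Some device of this kind, treating all $\tau\subset\sigma$ and coupling the two hypotheses, is indispensable and is entirely missing from your outline.
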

\begin{proof}[Proof of Lemma~\ref{lem1}]
  By \eqref{kdformula} we have
$\kappa d\eta = \sum_{\sigma\in\Sigma(k)}\mu_\sigma\,dx_\sigma$,
where
\begin{equation}\label{mus}
\mu_\sigma = \sum_{q\in\sigma^*}\bigl[x_q\partial_q\eta_\sigma +
\sum_{p\in\sigma}\epsilon(q,p,\sigma)x_q\partial_p\eta_{\sigma+q-p}\bigr].
\end{equation}
Since $\ldeg \kappa d\eta\ge1$, each monomial of the polynomial $\mu_\sigma$
is linear in at least one $\sigma^*$ variable.

For any $\sigma\in\Sigma(k)$ and any subset $\tau$ of $\sigma$,
let $S_\sigma^\tau$ be the span of the (ordinary, $0$-form) monomials which are independent
of the $\tau$ variables but depend on all of the other $\sigma$ variables,
and let $S_\sigma^*$ denote the span of the monomials which are superlinear
in all the $\sigma^*$ variables.  Denote by $P_\sigma^\tau$ the projection
onto $S_\sigma^\tau$.  That is, if $p=\sum p_m m$ where the sum is
over all monomials $m$ and the coefficients $p_m$ are real numbers, all but
finitely many zero, then $P_\sigma^\tau p:=\sum_{m\in S_\sigma^\tau}p_m m$.
Similarly, we denote by $Q_\sigma^\tau$ the projection
onto $S_\sigma^\tau\cap S_\sigma^*$.  We now calculate the result of applying $Q_\sigma^\tau$
to both sides of \eqref{mus}.  First we note that
\begin{equation}\label{q0}
 Q_\sigma^\tau\mu_\sigma=0,
\end{equation}
since every monomial of $\mu_\sigma$ is linear in at least one $\sigma^*$ variable,
and so none of them belong to $S_\sigma^*$.
Next, for each $q\in\sigma^*$,
\begin{equation}\label{q1}
Q_\sigma^\tau(x_q\partial_q\eta_\sigma)= P_\sigma^\tau(x_q\partial_q\eta_\sigma)
= x_q\partial_q (P_\sigma^\tau \eta_\sigma),
\end{equation}
with the first inequality holding since each monomial of $\eta_\sigma$, and therefore also
of $x_q\partial_q\eta_\sigma$ is superlinear in all the $\sigma$ variables.
Now we determine the action of $Q_\sigma^\tau$ on
the terms of the second sum on the right-hand side of \eqref{mus}.
For any $q\in\sigma^*$ and $p\in\sigma$, we claim that
\begin{equation}\label{q2}
Q_\sigma^\tau(x_q\partial_p\eta_{\sigma+q-p})
= 
\begin{cases}
0, & p\in\tau,\\
  \displaystyle x_q\partial_p (P_{\sigma+q-p}^\tau\eta_{\sigma+q-p}), & p\in\sigma\setminus\tau.
\end{cases}
\end{equation}
Indeed, $\eta_{\sigma+q-p}$ is superlinear in $x_p$, so every monomial
of $x_q\partial_p\eta_{\sigma+q-p}$ depends on $x_p$.  This implies
that the projection is $0$ in the case $p\in\tau$.  In case $p\in\sigma\setminus\tau$,
we write
$x_q\partial_p\eta_{\sigma+q-p} = \sum x_q\partial_p m$,
where the sum is over the monomials $m$ of $\eta_{\sigma+q-p}$.
Since neither $p$ or $q$ belongs to $\tau$, the monomial $m':=x_q\partial_p m$
is independent of the $\tau$ variables if and only if the same is true of the monomial $m$,
and, since $m'$ always depends on $x_p$,
it depends on all of the  $\sigma\setminus\tau$ variables
if and only if $m$ depends on all of the $(\sigma-p)\setminus\tau$ variables.  Further,
$m'$ is always superlinear in all the $\sigma^*$
variables except possibly $x_q$, and it is superlinear in $x_q$ if
and only if $m$ depends on $x_q$.  In short, $m'$ belongs to $S_\sigma^\tau\cap S_\sigma^*$
if and only if $m$ belongs to $S_{\sigma+q-p}^\tau$.  This completes the verification of \eqref{q2}.
Thus the application of $Q_\sigma^\tau$ to \eqref{mus} gives, in light of \eqref{q0}, \eqref{q1},
and \eqref{q2}, that
\begin{equation}\label{qmus}
\sum_{q\in\sigma^*}\bigl[x_q\partial_q(P_\sigma^\tau\eta_\sigma) +
\sum_{p\in\sigma\setminus\tau}\epsilon(q,p,\sigma)
 x_q\partial_p ( P_{\sigma+q-p}^\tau\eta_{\sigma+q-p})\bigr]
=0.
\end{equation}

We now claim that, for any $\sigma\in\Sigma(k)$ and $\tau\subset\sigma$, that
\begin{equation}\label{sumi}
\sum_{q\in\sigma^*}\sum_{p\in\sigma\setminus\tau}\epsilon(q,p,\sigma)
 x_q\partial_p (P_{\sigma+q-p}^\tau\eta_{\sigma+q-p})
=\bigl(c +\sum_{i\in\sigma\setminus\tau} x_i \partial_i\bigr)
(P_\sigma^\tau\eta_\sigma),
\end{equation}
where $c=\#(\sigma\setminus\tau)$.
Assuming this, we have from \eqref{qmus}, 
$$
0=\bigl(c +\sum_{i\in\sigma\setminus\tau} x_i \partial_i
+\sum_{q\in\sigma^*}x_q\partial_q\bigr)(P_\sigma^\tau\eta_\sigma)
= \bigl(c +\sum_{i\in\tau^*} x_i \partial_i\bigr)(P_\sigma^\tau\eta_\sigma)
=(c+r+1)(P_\sigma^\tau\eta_\sigma),
$$
where we have used Euler's formula for homogeneous polynomials
(i.e., the homotopy formula for $0$-forms)
in the last step.  Thus, $P_\sigma^\tau\eta_\sigma$ vanishes, and so
$\eta_\sigma=\sum_{\tau\subset\sigma}P_\sigma^\tau\eta_\sigma$ also vanishes.
Since $\sigma\in\Sigma(k)$ is arbitrary, this implies that $\eta=0$.

Thus it remains only to prove \eqref{sumi}. 
By the first formula of Lemma~\ref{kform},
$\kappa\eta = \sum_{\zeta\in\Sigma(k-1)} \omega_\zeta \,dx_\zeta$ where
$$
\omega_\zeta = \sum_{q\in\zeta^*} \epsilon(q,\zeta)x_q\eta_{\zeta+q}.
$$
Now let $\zeta\in\Sigma(k-1)$ and
$\tau\subset\zeta$, and denote
by $R_\zeta^\tau$ the projection onto $S_{\zeta}^\tau\cap S_\zeta^*$ (the span
of monomials which depend on all the $\zeta$ variables except the $\tau$ variables
and which are superlinear in the $\zeta^*$ variables).  By the hypothesis
that $\ldeg\kappa\eta\ge 1$,
\begin{equation*}
 R_\zeta^\tau\omega_\zeta=0.
\end{equation*}
Next we compute $R_\zeta^\tau (x_q\eta_{\zeta+q})$ for $q\in\zeta^*$.
If $m$ is a monomial of $\eta_{\zeta+q}$, then $m'= x_q\eta_{\zeta+q}$
depends on all the $\zeta$ variables except
for the $\tau$ variables if and only if the same is true of $m$.
Moreover, $m'$ is superlinear in all the $\zeta^*$ variables
if and only if $m$ depends on $x_q$ (since $m$ is superlinear in all
the $\zeta^*$ variables with the possible exception of $x_q$).  Thus,
\begin{equation*}
R_\zeta^\tau (x_q\eta_{\zeta+q}) = x_q P_{\zeta+q}^\tau \eta_{\zeta+q}.
\end{equation*}
Combining the last three displayed equations, we obtain
$$
\sum_{q\in\zeta^*}\epsilon(q,\zeta) x_q P_{\zeta+q}^\tau \eta_{\zeta+q}=0.
$$

Now choose some $p\in\zeta^*$ and differentiate this equation with respect
to $x_p$ to get
$$
\epsilon(p,\zeta) \partial_p(x_p P_{\zeta+p}^\tau\eta_{\zeta+p} )
+ \sum_{q\in\zeta^*-p} \epsilon(q,\zeta) x_q \partial_p
(P_{\zeta+q}^\tau \eta_{\zeta+q})=0,
$$
or, after rearranging,
\begin{equation}\label{t}
P_{\zeta+p}^\tau\eta_{\zeta+p} + x_p \partial_p(P_{\zeta+p}^\tau\eta_{\zeta+p} )
=-\sum_{q\in\zeta^*-p} \epsilon(p,\zeta)\epsilon(q,\zeta) x_q \partial_p
(P_{\zeta+q}^\tau \eta_{\zeta+q}).
\end{equation}

For any $\sigma\in\Sigma(k)$ and any $p\in\sigma$ set $\zeta=\sigma-p\in\Sigma(k-1)$.
Then we can rewrite \eqref{t} in terms of $\sigma$ as
$$
P_\sigma^\tau\eta_\sigma + x_p \partial_p(P_\sigma^\tau\eta_\sigma )
=-\sum_{q\in\sigma^*} \epsilon(p,\sigma-p)\epsilon(q,\sigma-p) x_q \partial_p
(P_{\sigma+q-p}^\tau \eta_{\sigma+q-p}).
$$
Finally, taking any $\tau\subset\sigma\in\Sigma(k)$, we sum over $p\in\sigma\setminus\tau$
to obtain
$$
 c P_\sigma^\tau\eta_\sigma
 + \sum_{p\in\sigma\setminus\tau} x_p \partial_p(P_\sigma^\tau\eta_\sigma )
=-\sum_{p\in\sigma\setminus\tau} \sum_{q\in\sigma^*}
 \epsilon(p,\sigma-p)\epsilon(q,\sigma-p) x_q \partial_p
(P_{\sigma+q-p}^\tau \eta_{\sigma+q-p}),
$$
where $c=\#(\sigma\setminus\tau)$.  In light of \eqref{epseps}, this establishes
\eqref{sumi}, and so completes the proof of the lemma.
\end{proof}

Finally, we give the proof of Proposition~\ref{0uni}.
\begin{proof}
By dilating and translating, it suffices to prove the result when $T=I^n$
with $I=[-1,1]$.
Let
$$
\mu= \sum_{\sigma \in \Sigma(k)} \mu_{\sigma} d x_{\sigma}
$$
be a polynomial differential form on the cube $I^n$.  Then $\tr_f \mu$ vanishes on
the faces $x_i=\pm 1$ if and only if for each $\sigma$ such that $i\notin\sigma$,
$1-x_i^2$ divides $\mu_{\sigma}$.  Now suppose that $\mu\in \0\S_r\Lambda^k(I^n)$,
so that $\tr_f \mu$ vanishes on all the faces of the cube.  It follows that
$$
\mu_{\sigma} = \tilde{\mu}_{\sigma} \prod_{i \in \sigma^*} (1-x_i^2)
$$
for some polynomial
$\tilde{\mu}_{\sigma}$.
The monomial expansion of $\mu$ then contains the form monomial
$m_\sigma\prod_{i \in \sigma^*} x_i^2\,dx_\sigma$, where $m_\sigma$ is any monomial of highest degree of $\tilde \mu_\sigma$.
The linear degree of this form monomial is $0$, so, by the degree property \eqref{deg},
its degree is at most $r+1$. 

Having established that $\mu$ is of degree at most $r+1$, let
$\eta$ be its homogeneous part of degree $r+1$. We have $\ldeg \eta = 0$.
Now we may match terms in the definition \eqref{defS} of $\S_r\Lambda^k(\Omega)$
to obtain that
$\eta = \kappa\upsilon + d \kappa\omega$, for some
$\upsilon \in \H_{r,1}\Lambda^{k+1}(I^n)$ and $\omega\in \H_{r+1,1}\Lambda^{k}(I^n)$.
Therefore, $\ldeg\kappa\eta=\ldeg\kappa d\kappa\omega\ge\ldeg\omega\ge 1$ and $\ldeg\kappa d\eta
=\ldeg\kappa d\kappa\upsilon\ge\ldeg\upsilon\ge 1$, where we have used Lemma~\ref{degs}.

By Lemma ~\ref{lem1}, $\eta=0$, hence the monomial of highest order in the expansion of $\mu_{\sigma}$ is of degree at most $r$.
It follows that $\tilde{\mu}_{\sigma}$ is of degree $r-2(n-k)$ for each $\sigma \in \Sigma(k)$. We can then choose the test function
$\nu =  \sum_{\sigma \in \Sigma(k)} (-1)^{\text{sgn}(\sigma,\sigma^*)}\tilde{\mu}_{\sigma} d x_{\sigma^*}$ in \eqref{idof} to conclude that $\mu$ vanishes.
\end{proof}

\bibliographystyle{amsplain}
\bibliography{cubicderham}
\end{document}